  \DeclareMathOperator{\Hilb}{{\rm Hilb}}
  \newcommand{\al }{\alpha }
  \newcommand{\cI }{\mathcal{I}}
  \newcommand{\dlo }{<_{\mathrm{dlex}}}
  \newcommand{\dloeq}{\leq_{\mathrm{dlex}}}
  \newcommand{\dos }{\mathrm{OS}}
  \newcommand{\GB }[1]{\mathcal{I}_q(#1)}
  \newcommand{\ndN }{\mathbb{N}}
  \newcommand{\ndZ }{\mathbb{Z}}
  \newcommand{\rk }{\mathrm{rk}}
  \newcommand{\redO }[1][\emptyset]{\reduc{\GB{#1}}}
  \newcommand{\reduc }[1]{\searrow \!\!\!^{#1}}
  \newcommand{\sso }{\sqsubseteq }
  \newcommand{\tm }{t^-}
  \newcommand{\tp }{t^+}
  \def\CC{{\mathbb C}}
  \def\rk{{\mathrm{rk}}}
  \def\Mat{{\mathfrak{M}}}
  \def\gMat{{\mathfrak{N}}}
  \def\OMat{\overline{\Mat }}
  \def\Sord{{<}}
  \def\Sordeq{{\leq}}
  \def\Sordeq{{\le }}
  \def\lm{{\mathrm{lm}}}
  \def\lc{{\mathrm{lc}}}
  \def\lt{{\mathrm{lt}}}
  \theoremstyle{plain}
    \newtheorem{theorem}{Theorem}[section]
    \newtheorem{proposition}[theorem]{Proposition}
    \newtheorem{lemma}[theorem]{Lemma}
    \newtheorem{corollary}[theorem]{Corollary}
  \theoremstyle{definition}
    \newtheorem{example}[theorem]{Example}
    \newtheorem{remark}[theorem]{Remark}
  \numberwithin{equation}{section}
  \newenvironment{note}[1][Note]
   {\bigskip\begin{center}\begin{boxedminipage}{4.5in}\setlength{\parindent}{1em}\noindent\textbf{#1.
}}
   {\end{boxedminipage}\end{center}\bigskip}
\begin{document}

\title{A deformation of the Orlik-Solomon algebra}

  \author{Istv\'an Heckenberger}
  \address{Fachbereich Mathematik und Informatik\\
           Philipps-Universit\"at Marburg\\
           35032 Marburg, Germany}
  \email{heckenberger@mathematik.uni-marburg.de}

  \author{Volkmar~Welker}
  \email{welker@mathematik.uni-marburg.de}

  \begin{abstract}
    A deformation of the Orlik-Solomon algebra of a matroid $\Mat$ is
    defined as a quotient of the free associative algebra over a 
    commutative ring $R$ with $1$. It is shown that the given generators 
    form a Gr\"obner basis and that after suitable homogenization 
    the deformation and the Orlik-Solomon have the same Hilbert series as $R$-algebras.
    For supersolvable matroids, equivalently fiber type arrangements, there is a 
    quadratic Gr\"obner basis and hence the algebra
    is Koszul.
  \end{abstract}

  \subjclass[2000]{Primary: 05B35 Secondary: 16S37, 16S80}
 
  \maketitle

  \section{Introduction and Statement of Results}

  In this paper we introduce and study a deformation of the Orlik-Solomon algebra 
  of a matroid $\Mat$. We refer the reader to \cite{inp-Orlik07} and \cite{a-Yuzvinsky01} for general facts about
  the classical Orlik-Solomon algebra. Our deformation, which is different from the one in \cite{a-sheltonyuzvinsky97},
  is presented as a quotient of the free
  associative algebra over some commutative ring $R$ with $1$ by an ideal 
  $I_q(\Mat)$ whose generators are 
  deformations of the classical generators of the defining ideal of the
  Orlik-Solomon algebra by a parameter $q \in R$. Choosing $q = 0$ yields the
  Orlik-Solomon algebra over $R$. Our main result, Theorem \ref{thm:main}, states that the
  given generators of $I_q(\Mat)$ are a Gr\"obner basis of the ideal. As a consequence
  it is shown in Corollary \ref{cor:hilbert} that the deformation with $q$ regarded 
  as a degree $2$ element is a standard graded $R$-algebra which has the same 
  Hilbert series as the Orlik-Solomon algebra. For supersolvable matroids, equivalently fiber
  type arrangements, the 
  existence of a quadratic Gr\"obner basis is shown which implies that the algebra is Koszul.
  As further consequences we 
  obtain in Corollary \ref{cor:os} a known Gr\"obner basis for the Orlik-Solomon algebra as
  a quotient of the free and the exterior algebra.  
  The remaining part of the introduction is devoted to the basic definitions and statement of
  results. In Section \ref{sec:groebner} basic facts about non-commutative 
  Gr\"obner basis theory are given. Section \ref{sec:technical} provides technical lemmas
  needed for the proof of the main result. Finally in Section \ref{sec:proofs} the
  missing proofs are given and an independence statement is presented.

  Let $S$ be a finite set and fix a total order $\Sord$ on $S$. 
  Let $R$ be a commutative ring with unit $1$ and let $q \in R$.
  For an arbitrary set system $\Mat \subseteq 2 ^S$ we define a two-sided ideal $I_q(\Mat)$ 
  in the ring $$A := R\langle t_s~|~s \in S\rangle$$
  of non-commutative polynomials  
  in the variables $t_s$, $s \in S$, with coefficients in $R$.

  Let $\GB{\Mat }$ denote the subset of $A$ consisting of the elements
  \begin{gather}
     t_s^2-q,\quad s\in S, \label{eq:ts2}\\
     t_rt_s+t_st_r-2q,\quad r,s\in S, s\Sord r \label{eq:trts},\\
     \tm_J :=\sum _{I\subseteq J,2\nmid \#I} (-1)^{\ell _J(I)}
       (-q)^{(\#I-1)/2}t_{J\setminus I}, \quad \mbox{~for all~} J\in \Mat. \label{eq:tJ}
  \end{gather}

  Here, for a subset $I = \{ j_{\alpha_1} \Sord \cdots \Sord j_{\alpha_{\# I}} \}
  \subseteq J=\{j_1\Sord \cdots \Sord j_{\#J}\}$ 
  we set
  $$\ell_J(I)=\sum _{\nu=1}^{\#I}(\al _{\nu}-\nu ).$$
  The two-sided ideal of $A$ generated by $\GB{\Mat}$ will be denoted by
  $I_q(\Mat )$.
  We write $\dos_q(\Mat)$ for the quotient $A / I_q(\Mat)$. 
  Our main motivation comes from the situation when $q = 0$ and $\Mat$ is indeed the set of 
  circuits of a loopless matroid without parallel elements. 
  We refer the reader to the books \cite{b-Welsh} and
  \cite{b-Oxley} as a general reference for matroid theory and recall that for loopless matroids without parallel element set of circuits $\Mat$ 
  is characterized by the 
  following three axioms:
  \begin{itemize}
    \item[(C1)] $J\in \Mat$ $\Rightarrow$ $\# J>2$.
    \item[(C2)] $J,K\in \Mat$, $J\subseteq K$ $\Rightarrow $ $J=K$.
    \item[(C3)] For any $J,K\in \Mat$ such that $J\not=K$
      and for any $x\in J\cap K$ there exists $L\in \Mat$ such that $L\subseteq
    (J\cup K)\setminus \{x\}$.
  \end{itemize}
  We will refer to (C3) also by the name \textit{circuit axiom}. 

  Now if $\Mat$ is the set of circuits of a loopless matroid without parallel elements then for $q = 0$ the algebra
  $\dos_q(\Mat )$ is the \textit{Orlik-Solomon algebra} of $\Mat$.
  In case $\Mat$ is realizable as the set of circuits of a finite set of hyperplanes in 
  $\CC^d$ then the Orlik-Solomon algebra of $\Mat$ is known to be the cohomology 
  algebra of the set-theoretic complement of the union of the hyperplanes \cite[(5.2)]{a-OrlikSolomon80}.
  In general, for $q\not=0$ the algebra $\dos _q(\Mat )$ is not isomorphic to the
  Orlik-Solomon algebra of $\Mat $, take for example $S$ with $\#S=1$.
  On the other hand, if $q$ is a square in $R$, then $\dos _q(\Mat )$ is easily seen to be
  isomorphic to $\dos _1(\Mat )$.

  Before we can proceed to the statement of our main results we need some more definitions.  
  Throughout this paper we will use the \textit{degree lexicographic} order $\dlo$ (see Section \ref{sec:basics}) on the monomials
  in $A$ induced by the total order $<$ on $S$ as a term order for the monomials in $A$. 
  We enumerate the elements of any subset $J\subseteq S$ by $j_1,\dots ,j_{\#J}$ such that
  $j_1\Sord \cdots \Sord j_{\#J}$. For any $J\subseteq S$ we write $t_J$ for 
  the monomial $t_{j_1} \cdots t_{j_{\#J}}\in A$. 
  The degree lexicographic order on the monomials $t_J$ induces the degree lexicographic order on subsets of $S$;
  that is for subsets $J,K \subseteq S$ we set $J \dlo K$ if and only if $\#J<\#K$ or $\#J=\#K$ and the minimum 
  of the symmetric difference of $J$ and $K$ is contained in $J$.
  In addition, for two subsets $K,J \subseteq S$ we say that $K$ is a \textit{convex subset of} $J$ if $K\subseteq J$
  and if $j\in K$ for all $j\in J$ with $k\Sord j\Sord k'$ for some $k,k'\in K$.
  We write $K\sso J$ in this situation.
  Recall that a subset $J \subseteq S$ is called \textit{dependent} in $\Mat$
  if it contains a circuit from $\Mat$. 
  Let $\OMat $ be the set of dependent subsets of $S$ defined recursively as follows:
  \begin{itemize}
    \item[(GC)] A dependent set $J\subseteq S$ belongs to $\OMat $ if and only 
     if $K\in \OMat $ with $K\dlo J$ implies that $K\setminus \{k_1\}\not \sqsubseteq J\setminus \{j_1\}$.
  \end{itemize}
  Then we call $\OMat $ the set of \textit{Gr\"obner circuits}
  of $\Mat $. 

  \begin{theorem}
    \label{thm:main}
    Let $\OMat $ be a set of Gr\"obner circuits of a loopless matroid $\Mat $ without parallel elements.
    Then $I_q(\Mat) = I_q(\OMat)$ and the set $\GB{\OMat}$ is a 
    Gr\"obner basis of the ideal $I_q(\Mat)$ with respect to the degree lexicographic order for all choices of $q$. 
  \end{theorem}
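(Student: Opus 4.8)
The plan is to verify the Buchberger criterion for non-commutative Gröbner bases: one must show that every S-polynomial (overlap polynomial) built from pairs of elements of $\GB{\OMat}$ reduces to zero modulo $\GB{\OMat}$. The first task is to record the leading monomials of the three families of generators with respect to $\dlo$. For \eqref{eq:ts2} the leading term is $t_s^2$; for \eqref{eq:trts} with $s\Sord r$ it is $t_rt_s$; and for $\tm_J$ in \eqref{eq:tJ} the top-degree part is $t_J$ itself (taking $I=\emptyset$), so the leading monomial is $t_J$, with the remaining summands all of strictly smaller degree. Thus the leading term ideal is generated by $\{t_s^2 : s\in S\}\cup\{t_rt_s : s\Sord r\}\cup\{t_J : J\in\OMat\}$, and one immediately sees that a monomial is a normal form (standard word) precisely when it is $t_K$ for a subset $K$ written in increasing order that is \emph{independent} in $\Mat$ and, moreover, contains no $t_J$, $J\in\OMat$, as a \emph{convex} submonomial — which is exactly the combinatorial content encoded by the $\sso$-relation and axiom (GC). Before doing the overlaps one should also dispose of the identity $I_q(\Mat)=I_q(\OMat)$: since $\OMat\subseteq\OMat$ consists of dependent sets and every circuit of $\Mat$ lies in the ideal generated by $\GB{\OMat}$ (a circuit not in $\OMat$ is, by (GC), reducible using a $\dlo$-smaller Gröbner circuit together with the quadratic relations), the two ideals coincide; conversely $\OMat$-elements are dependent hence lie in $I_q(\Mat)$ by the circuit axiom (C3). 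This reduction step should be proved first, as Theorem \ref{thm:main} is then a statement purely about $\GB{\OMat}$.

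Next I would enumerate the overlap types. Overlaps among the quadratic relations \eqref{eq:ts2} and \eqref{eq:trts} are the classical ones: $t_s\cdot t_s^2$ versus $t_s^2\cdot t_s$ (trivial), $t_r\cdot t_rt_s$ versus $t_r^2\cdot t_s$, $t_rt_s\cdot t_s$ versus $t_r\cdot t_s^2$, and the triple-letter overlaps $t_rt_s\cdot t_u$ with $u\Sord s\Sord r$ giving the braid-type relation; these close up because the quadratic part of the presentation is (a $q$-deformation of) a Gröbner-basis presentation of the ``shuffle'' algebra, and the deformation terms $-q$, $-2q$ are handled by the grading with $\deg q=2$. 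The genuinely new overlaps are between a quadratic generator and a $\tm_J$, and between two of the $\tm_J$'s. For the quadratic/$\tm_J$ overlaps one uses that $t_J$ contains $t_{j_1}t_{j_2}$ and $t_{j_{\#J-1}}t_{j_{\#J}}$ as a prefix/suffix; reducing the resulting S-polynomial, the lower-order terms of $\tm_J$ conspire — this is precisely where the sign $\ell_J(I)$ and the power $(-q)^{(\#I-1)/2}$ are designed to make the telescoping work — so that the result rewrites to a combination of $\tm_{J'}$ for subsets $J'$ obtained by small modifications of $J$, each reducible by induction on $\dlo$. For the $\tm_J$/$\tm_K$ overlaps, an overlap occurs when the increasing words $t_J$ and $t_K$ share a convex block; the circuit axiom (C3) produces a circuit $L\subseteq (J\cup K)\setminus\{x\}$, and (GC) guarantees that a $\dlo$-smaller Gröbner circuit is available to continue the reduction. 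Throughout, the technical lemmas promised in Section \ref{sec:technical} are invoked to control exactly how $\tm_J$ multiplies against the $t_s$'s and how the $\ell_J$-signs behave under insertion and deletion of elements.

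The main obstacle, as usual for matroid Gröbner-basis arguments, is the $\tm_J$/$\tm_K$ case: one must show that after taking the S-polynomial of two deformed circuit relations whose leading monomials overlap in a convex block, the difference lies in the ideal and reduces to zero using \emph{only} the Gröbner circuits that are $\dlo$-smaller than both $J$ and $K$. This requires simultaneously (i) the combinatorial input that the circuit axiom yields a suitable $L$, (ii) the inductive setup on $\dlo$ licensed by (GC), and (iii) a careful bookkeeping of the $q$-deformation: the tail terms $(-q)^{(\#I-1)/2}t_{J\setminus I}$ must recombine across the two relations into tails of the relation for $L$ (and of quadratic relations), with all signs matching. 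I would organize this by first proving a ``rewriting lemma'' expressing $t_s\cdot\tm_J$ and $\tm_J\cdot t_s$ modulo $I_q(\OMat)$ in terms of $\tm$'s of sets obtained from $J$ by adding or removing $s$, then feed that lemma into the overlap computations. Once all overlap polynomials are shown to reduce to zero, the non-commutative Buchberger criterion gives that $\GB{\OMat}$ is a Gröbner basis with respect to $\dlo$, for every $q\in R$, completing the proof.
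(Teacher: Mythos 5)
There is a concrete error at the very start of your argument that undermines the rest: you claim the leading monomial of $\tm_J$ is $t_J$, ``taking $I=\emptyset$''. But the sum in \eqref{eq:tJ} runs over subsets $I$ with $2\nmid\#I$, i.e.\ of \emph{odd} cardinality, so $I=\emptyset$ does not occur; the top-degree terms are the $t_{J\setminus\{j_\nu\}}$ (this is why at $q=0$ one recovers $\sum_\nu(-1)^{\nu-1}t_{J\setminus\{j_\nu\}}$, the image under the boundary map, rather than $e_J$ itself). The leading monomial is therefore $t_{J\setminus\{j_1\}}$, the \emph{broken} Gr\"obner circuit, of degree $\#J-1$. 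This misidentification propagates everywhere: your description of the initial ideal and of the standard monomials is wrong (they are the $t_K$ with $K$ containing no broken circuit --- the NBC sets --- not the independent sets, and this is what makes the Hilbert series come out right in Corollary \ref{cor:hilbert}); the overlaps you would have to analyse are different (e.g.\ $t_s^2$ overlaps $\lm(\tm_J)$ when $s=j_2$, not $j_1$); and the convexity condition in (GC), which compares $K\setminus\{k_1\}$ with $J\setminus\{j_1\}$, only matches the overlap combinatorics for the correct leading monomials. As written, the Buchberger verification you outline cannot be carried out.

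Beyond this, the overall architecture you propose --- overlap criterion, a ``rewriting lemma'' for $t_s\tm_J$ and $\tm_Jt_s$, induction on $\dlo$, and the circuit axiom to produce smaller dependent sets --- does coincide with the paper's strategy, but the two genuinely hard steps are only gestured at. First, the equality $I_q(\Mat)=I_q(\OMat)$ is not a formality: elements of $\OMat$ are dependent sets that need not be circuits, so $\tm_J$ for $J\in\OMat$ is not among the defining generators of $I_q(\Mat)$, and showing it lies there requires an induction (via Lemma \ref{le:ttJ} and Lemma \ref{le:tptm}); conversely, showing $\tm_J$ reduces to zero modulo $\GB{\OMat}$ for \emph{every} circuit $J$ is the content of Proposition \ref{pr:depred0}, a delicate case analysis on the position of $k_1$ relative to $J$ using Lemmas \ref{le:tmtm}, \ref{le:tmspec1}, \ref{le:tmspec2}, \ref{le:subtreduction} and \ref{le:bigcirc}. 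Second, in the $\tm_J$/$\tm_K$ overlap (Case 10 of the paper's proof) the reduction does not simply invoke (C3) to produce one circuit $L$; it requires expanding $\tm_Jt_{K'\setminus J'}-t_{J'\setminus K'}\tm_K$ via the product formulas and showing each resulting $\tm_{(J\cup K)\setminus\{s\}}$ reduces to zero by Proposition \ref{pr:depred0}. Your proposal correctly names the ingredients but, combined with the wrong leading term, does not constitute a proof.
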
 

  The Gr\"obner basis is easily seen to depend on the total order chosen on $S$ but in Proposition 
  \ref{pr:indep} we show that $I_q(\Mat)$ and hence $\dos_q(\Mat)$ is independent of the order on $S$.  
  Simple inspection shows that the leading monomial of \eqref{eq:ts2} is $t_s^2$, of \eqref{eq:trts} is $t_rt_s$ for
  $s \Sord r$ and of \eqref{eq:tJ} is $t_{J \setminus \{ j_1 \}}$.
  Thus all leading coefficients of $\GB{\OMat}$ are $1$. 
  The standard monomials with respect to the Gr\"obner basis $\GB{\OMat}$ are the monomials $m = t_J$
  for $J \subseteq S$ for which there is no factorization $m = m_1 t_{K'} m_2$ for monomials $m_1, m_2$ where $K'$ is a \textit{broken Gr\"obner circuit}; that
  is there is a Gr\"obner circuit $K$ for which $K' = K \setminus \{k_1\}$.
  By the definition of a Gr\"obner circuit it then follows that the standard monomials with respect to the Gr\"obner basis $\GB{\OMat}$ are the
  monomials $m = t_J$ such that $J$ does not contain a \textit{broken circuit} of $\Mat$; that is a circuit with its least element removed.
  These facts immediately imply:

  \begin{corollary} 
    \label{cor:freemodule}
    Let $\Mat$ be a loopless matroid without parallel elements. 
    Then the algebra $\dos_q(\Mat)$ is a free 
    $R$-module whose rank is independent of $q$.

    The standard monomials with respect to the Gr\"obner basis $\GB{\OMat}$ are the monomials $m = t_J$ 
    for $J \subseteq S$ for which $J$ does not contain a broken circuit.
  \end{corollary}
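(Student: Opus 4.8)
The plan is to obtain both assertions directly from Theorem~\ref{thm:main} and the elementary theory of non-commutative Gr\"obner bases recalled in Section~\ref{sec:groebner}. The first step is the division (normal form) algorithm: by Theorem~\ref{thm:main} the set $\GB{\OMat}$ is a Gr\"obner basis of $I_q(\Mat)$ with respect to $\dlo$, and, as observed by inspection of \eqref{eq:ts2}, \eqref{eq:trts} and \eqref{eq:tJ}, each of its elements has leading coefficient $1$. Hence reduction modulo $\GB{\OMat}$ is well defined over the arbitrary commutative ring $R$ and rewrites every element of $A$ as a unique $R$-linear combination of standard monomials; consequently the images in $\dos_q(\Mat)=A/I_q(\Mat)$ of the standard monomials form an $R$-module basis, so $\dos_q(\Mat)$ is a free $R$-module.

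The second step is to describe these standard monomials. The leading monomials of $\GB{\OMat}$ are $t_s^2$ for $s\in S$, $t_rt_s$ for $s\Sord r$, and $t_{K\setminus\{k_1\}}$ for $K\in\OMat$. A word in the variables $t_s$ that contains none of the first two families as a factor is exactly a monomial $t_J$ with $J\subseteq S$ (no repeated letter, letters in increasing order); among those, $t_J$ avoids every $t_{K\setminus\{k_1\}}$ as a factor if and only if no broken Gr\"obner circuit is a convex subset of $J$. It remains to see that this is equivalent to $J$ containing no broken circuit of $\Mat$ at all, which is precisely the reformulation carried out in the paragraph preceding the statement; I would reproduce that argument, the point being that the recursive condition (GC) defining $\OMat$ guarantees that as soon as $J$ contains some broken circuit it already contains some broken Gr\"obner circuit as a convex subset (an induction on the $\dlo$-order of the circuits involved).

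Finally, the set $\{\,J\subseteq S : J\text{ contains no broken circuit of }\Mat\,\}$ is determined by $\Mat$ and the fixed total order $\Sord$ alone and is independent of $q$; since its cardinality is the rank of the free $R$-module $\dos_q(\Mat)$, that rank does not depend on $q$. The only part requiring genuine work is the combinatorial equivalence in the second step, and even that is essentially recorded in the text before the statement; the freeness over a general $R$ costs nothing, precisely because every leading coefficient equals $1$, so no flatness or localization hypothesis on $R$ is needed.
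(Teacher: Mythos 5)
Your proposal is correct and follows essentially the same route as the paper: all leading coefficients of $\GB{\OMat}$ equal $1$, so by Theorem~\ref{thm:main} the standard monomials form an $R$-basis of the quotient, these are exactly the increasing words $t_J$ avoiding broken Gr\"obner circuits as convex (contiguous) subsets, and condition (GC) converts this into the broken-circuit description, which is $q$-independent. The paper records precisely these observations in the paragraph preceding the corollary and concludes with ``these facts immediately imply''; your write-up just makes the two implicit steps (uniqueness of normal forms over a general $R$ with unit leading coefficients, and the (GC)-based equivalence of the two avoidance conditions) explicit.
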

  
  The algebra $\dos _q(\Mat)$ is $\ndZ /2\ndZ$-graded for the grading induced by $\deg t_s=1$
  for all $s\in S$.
  If $R$ is $\ndZ$-graded and $t_0\in R$ is homogeneous of degree one 
  then the $\ndZ$-grading
  of $R$ extends to a $\ndZ$-grading of $\dos _q(\Mat)$ for $q = t_0^2$.
  We consider the case $R = Q[t_0]$ for some commutative ring $Q$ with $1$ and extend the total order on the variables by setting 
  $t_0$ to be the least variable. Then we consider $I_q(\Mat)$ as an ideal in $Q \langle t_s | s\in S \rangle [t_0]$. 
  We deduce from  Theorem \ref{thm:main}, standard facts about homogenizing Gr\"obner bases  (see \cite[Thm. 3.7]{b-Li02})  
  and Corollary \ref{cor:freemodule} the following corollary.

 \begin{corollary} 
    \label{cor:hilbert}
    Let $\Mat$ be a loopless matroid without parallel elements,
    $R = Q[t_0]$ for a commutative ring $Q$ with $1$ and $t_0$ a degree one variable.
    If we set $q = t_0^2$ then 
    \begin{enumerate}
      \item 
        the set $\GB{\OMat}$ is a Gr\"obner basis of $I_q(\Mat)$ for the degree lexicographic order with $t_0$ being the
        least variable. The algebra $\dos_q(\Mat)$ is a free $Q$-module and a standard graded $Q$-algebra. 
      \item The Hilbert series of $\dos_q(\Mat)$ as a $Q$-algebra is
        $$\frac{1+c_1 z + \dots + c_{\rk(\Mat)} z^{\rk(\Mat)}}{1-z} = \Hilb_\Mat(z) \cdot \frac{1}{1-z} ,$$
        where $\rk(\Mat)$ is the rank of $\Mat$, $c_i$ is the number of subsets of $S$ of cardinality $i$ not
        containing a broken circuit and $\Hilb_\Mat(z)$ the Hilbert series of the Orlik-Solomon algebra of $\Mat$
        as an algebra over a field $k$.
    \end{enumerate}
  \end{corollary}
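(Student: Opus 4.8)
The plan is to derive the corollary from Theorem~\ref{thm:main} by the standard device of dehomogenization followed by rehomogenization with respect to the central variable $t_0$. First I would specialize the construction to $R = Q$ and $q = 1$: the resulting generators $t_s^2 - 1$, $t_rt_s + t_st_r - 2$ (for $s \Sord r$) and $\sum_{I\subseteq J,\, 2\nmid\#I}(-1)^{\ell_J(I)+(\#I-1)/2}t_{J\setminus I}$ (for $J\in\OMat$) generate an ideal $I_1(\Mat)\subseteq Q\langle t_s\mid s\in S\rangle$ for which, by Theorem~\ref{thm:main}, $\GB{\OMat}$ (with $q=1$) is a Gr\"obner basis with respect to $\dlo$, with leading monomials $t_s^2$, $t_rt_s$ and $t_{J\setminus\{j_1\}}$. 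Next I would observe that, once $t_0$ is assigned degree one and we set $q = t_0^2$, each element of $\GB{\OMat}$ becomes homogeneous — $t_s^2 - t_0^2$ and $t_rt_s + t_st_r - 2t_0^2$ in degree $2$, and $\tm_J$ in degree $\#J - 1$ because its summand indexed by $I$ has degree $(\#I - 1) + (\#J - \#I) = \#J - 1$ — and that it is precisely the homogenization with respect to $t_0$ of the corresponding $q=1$ generator, the monomial $t_{J\setminus I}$ of degree $\#J - \#I$ acquiring the factor $t_0^{\#I-1} = (t_0^2)^{(\#I-1)/2}$. In particular $\dos_q(\Mat)$ is generated by the degree-one elements $t_0$ and $t_s$, $s\in S$, so it is a standard graded $Q$-algebra.

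The core step is to invoke the standard behaviour of Gr\"obner bases under homogenization by a variable declared to be the smallest one \cite[Thm.~3.7]{b-Li02}: since $t_0$ is least, the factors $t_0^{\#I-1}$ attach only to non-leading monomials and, being powers of the smallest variable, cannot overtake the leading monomial, so the leading monomials of the homogenized generators are still $t_s^2$, $t_rt_s$ and $t_{J\setminus\{j_1\}}$. Hence $\GB{\OMat}$ with $q = t_0^2$ is a Gr\"obner basis of the homogenization $I_1(\Mat)^{\mathrm h} \subseteq Q\langle t_s\mid s\in S\rangle[t_0]$ for the degree lexicographic order with $t_0$ least; and because these homogenized elements generate $I_q(\Mat)$ by construction while a Gr\"obner basis of an ideal generates it, $I_q(\Mat) = I_1(\Mat)^{\mathrm h}$. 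This gives part (1) apart from freeness, for which I would note that the standard monomials modulo this Gr\"obner basis are exactly the monomials $t_0^{a}t_J$ with $a\geq 0$ and $J\subseteq S$ containing no broken circuit (the leading monomials are the same $t_0$-free monomials as in Corollary~\ref{cor:freemodule}, and, as observed just before that corollary, avoiding broken Gr\"obner circuits is the same as avoiding broken circuits); these form a $Q$-basis of $\dos_q(\Mat)$.

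For part (2) I would read the Hilbert series off this basis. The monomial $t_0^{a}t_J$ has degree $a + \#J$, so for every broken-circuit-free $J$ with $\#J = i\leq n$ there is exactly one $a\geq 0$ with $a + \#J = n$; hence the $n$-th graded component of $\dos_q(\Mat)$ is $Q$-free of rank $\sum_{i=0}^{n}c_i$, where $c_i$ is the number of $i$-element broken-circuit-free subsets of $S$, with $c_0 = 1$ and $c_i = 0$ for $i > \rk(\Mat)$ (a larger subset is dependent, hence contains a circuit and therefore a broken circuit). Summing,
\[
\sum_{n\geq 0}\Bigl(\sum_{i=0}^{n}c_i\Bigr)z^n \;=\; \Bigl(\sum_{i=0}^{\rk(\Mat)}c_i z^i\Bigr)\cdot\frac{1}{1-z} \;=\; \frac{1 + c_1 z + \dots + c_{\rk(\Mat)}z^{\rk(\Mat)}}{1-z}.
\]
Finally, the $q = 0$ case of Theorem~\ref{thm:main} (over a field $k$) shows that the classical Orlik--Solomon algebra has the broken-circuit-free monomials $t_J$ as a $k$-basis, so $\Hilb_\Mat(z) = 1 + c_1 z + \dots + c_{\rk(\Mat)}z^{\rk(\Mat)}$, which yields the displayed identity.

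The step I expect to require the most care is the homogenization itself: one must check that passing from $I_1(\Mat)$ over $Q$ to $I_q(\Mat)$ over $Q[t_0]$ with $q = t_0^2$ really is a homogenization in the non-commutative setting — that is, that the hypotheses of \cite[Thm.~3.7]{b-Li02} (degree compatibility of $\dlo$, and centrality and minimality of the adjoined variable $t_0$) are met — so that leading terms, and with them the standard monomials and their $Q$-ranks in each degree, are transported unchanged from the $q = 1$ situation. Once this is secured, the rest is bookkeeping with Corollary~\ref{cor:freemodule} and the classical no-broken-circuit basis.
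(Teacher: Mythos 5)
Your proposal is correct and follows essentially the same route as the paper, which likewise derives the corollary from Theorem~\ref{thm:main} together with the homogenization theorem \cite[Thm.~3.7]{b-Li02} and the count of standard monomials not containing a broken circuit; you have merely made explicit the dehomogenization at $q=1$ that the paper leaves implicit. Your identification of the standard monomials as $t_0^a t_J$ with $a\ge 0$ arbitrary (rather than the paper's passing phrase ``$q^i t_J$'', which read literally would give denominator $1-z^2$) is the correct reading, as it is what yields the stated factor $1/(1-z)$.
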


  Note that the first part together with the second part of Corollary \ref{cor:freemodule} implies that the standard monomials of the Gr\"obner basis are
  $q^i t_J$ for $i \geq 0$ and $J \subset S$ such that $J$ does not contain a broken circuit. Part (2) of Corollary \ref{cor:hilbert} 
  hence follows by a simple counting argument and standard facts about Orlik-Solomon algebras.
  
  We note that experiments suggest that the generators \eqref{eq:ts2}, \eqref{eq:trts} and
  \eqref{eq:tJ} are the unique deformations of the corresponding polynomials for $q = 0$ by variables 
  of degree $\geq 1$ satisfying Corollary \ref{cor:hilbert}(2).

  Using results from matroid theory \cite{a-bjoernerziegler91} we obtain the following results 
  extending results from \cite{a-sheltonyuzvinsky97} (Koszul property) and \cite{a-peeva03} (quadratic 
  Gr\"obner basis and Koszul property) for Orlik-Solomon algebras to our deformation.
  We refer the reader to \cite{inp-froeberg99} for basic facts about Koszul algebras.
  
  \begin{corollary}
    \label{cor:koszul}
    Let $\Mat$ be a supersolvable loopless matroid without parallel elements,
    $R = Q[t_0]$ for a commutative ring $Q$ with $1$ and $t_0$ a degree one variable.
    If we set $q = t_0^2$ then $\GB{\OMat}$ is a quadratic Gr\"obner basis and 
    in particular $\dos_q(\Mat)$ is a standard graded Koszul algebra.
  \end{corollary}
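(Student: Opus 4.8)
The plan is to choose the total order on $S$ well, reduce the statement to a bound on the sizes of Gr\"obner circuits, and then obtain that bound from a short computation with modular flats. Since $\Mat $ is supersolvable, fix the order on $S$ so that it is compatible with a maximal chain of modular flats $\emptyset =F_0\subsetneq F_1\subsetneq \cdots \subsetneq F_r=S$; writing $B_i=F_i\setminus F_{i-1}$, this means that every element of $B_i$ precedes every element of $B_j$ whenever $i<j$ (such a chain exists by supersolvability, cf.\ \cite{a-bjoernerziegler91}). With $q=t_0^2$ of degree $2$, every term of the generator \eqref{eq:tJ} has degree $\#J-1$, so $\tm_J$ is homogeneous of degree $\#J-1$, while \eqref{eq:ts2} and \eqref{eq:trts} are homogeneous of degree $2$. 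Hence, as soon as every Gr\"obner circuit has exactly three elements, $\GB{\OMat}$ is a set of degree-$2$ polynomials with leading coefficient $1$; it is a Gr\"obner basis by Theorem \ref{thm:main}, and the algebra $\dos_q(\Mat )$, which is standard graded by Corollary \ref{cor:hilbert}(1), is then Koszul by the usual fact that a quadratic Gr\"obner basis produces a Koszul algebra (see \cite{inp-froeberg99}); although $\GB{\OMat}$ depends on the chosen order, the Koszulness of $\dos_q(\Mat )$ is intrinsic by Proposition \ref{pr:indep}. It therefore remains to show that $\#C=3$ for every $C\in \OMat$.

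By axiom (C1) no dependent set has fewer than three elements, so we only have to exclude a circuit $C\in \OMat$ with $m:=\#C\ge 4$. Write $C=\{c_1\Sord \cdots \Sord c_m\}$ and let $k$ be largest with $C\cap B_k\ne \emptyset $, so that $C\subseteq F_k$ and $C\setminus B_k=C\cap F_{k-1}$. First, $\#(C\cap B_k)\ge 2$: otherwise $C\cap B_k=\{c_m\}$, so $C\setminus \{c_m\}\subseteq F_{k-1}$ and $c_m\in \overline{C\setminus \{c_m\}}\subseteq F_{k-1}$, contradicting $c_m\in B_k$. As the elements of $B_k$ exceed those of $F_{k-1}$, this gives $c_{m-1},c_m\in B_k$. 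Now apply modularity of $F_{k-1}$ to the rank-$2$ flat $L:=\overline{\{c_{m-1},c_m\}}\subseteq F_k$: since $c_{m-1},c_m\in F_k\setminus F_{k-1}$, the flat $L\vee F_{k-1}$ properly contains $F_{k-1}$ and lies in $F_k$, hence equals $F_k$ for rank reasons, and the modular law gives $\rk(L\cap F_{k-1})=\rk L+\rk F_{k-1}-\rk F_k=1$. As $\Mat $ has no loops and no parallel elements, $L\cap F_{k-1}$ is a single element $x$; since $L\subseteq F_k$ and $L\cap F_{k-1}=\{x\}$, every other element of $L$ lies in $B_k$ and hence exceeds $x$, so $x$ is the least element of $L$. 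Then $T:=\{x,c_{m-1},c_m\}$ is a rank-$2$ set of three distinct, pairwise non-parallel elements, i.e.\ a three-element circuit.

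It remains to see that $T$ certifies $C\notin \OMat$ through (GC). First $T\in \OMat$: any $K\in \OMat$ with $K\dlo T$ has $\#K=3$, and if moreover $K\setminus \{k_1\}\sqsubseteq T\setminus \{x\}=\{c_{m-1},c_m\}$, then (a two-element convex subset of a two-element set is the whole set) $K=\{k_1,c_{m-1},c_m\}$ with $k_1=\min K$; being dependent with three elements, $K$ is a circuit, so $\rk K=2$ and $k_1\in L$, whence $k_1\ge \min L=x$, while $K\dlo T$ forces $k_1<\min T=x$ --- a contradiction. So no such $K$ exists and $T\in \OMat$. Finally $T\dlo C$ because $\#T=3<m$, and $T\setminus \{x\}=\{c_{m-1},c_m\}$ consists of the two largest elements of $C$, hence is a convex subset of $C\setminus \{c_1\}$; so (GC) forces $C\notin \OMat $, the desired contradiction. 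The main obstacle is the middle paragraph: one must use that $C$ is genuinely a circuit, not merely dependent, to force its two largest elements into $B_k$, and then track convexity rather than mere inclusion throughout (since (GC) uses $\sqsubseteq$); a preliminary point to settle is that Gr\"obner circuits are in fact circuits, so that the reduction in the first paragraph is legitimate.
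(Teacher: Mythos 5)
Your treatment of genuine circuits is correct and complete: the modularity computation producing the three\--element circuit $T=\{x,c_{m-1},c_m\}$, the verification that $T\in\OMat $, and the convexity check $\{c_{m-1},c_m\}\sqsubseteq C\setminus\{c_1\}$ are all sound. Your route differs from the paper's only in that the paper invokes the partition characterization of supersolvability from \cite[Thm.~2.8(5)]{a-bjoernerziegler91} directly, whereas you rederive the needed three\--circuit from modularity of $F_{k-1}$; that is a legitimate, self\--contained substitute.

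The genuine gap is exactly the point you flag in your last sentence and never settle. By (GC), $\OMat $ consists of \emph{dependent sets}, not circuits, and quadraticity of $\GB{\OMat}$ requires that \emph{every} element of $\OMat $ have three elements. Your exclusion argument breaks for a dependent non\--circuit $C$ at its first step, where $c_m\in\overline{C\setminus\{c_m\}}$ is used to force $\#(C\cap B_k)\ge 2$; this inclusion is a property of circuits. Moreover, ``Gr\"obner circuits are circuits'' is \emph{false} for a general order, so it cannot be waved through: on $S=\{1,2,3,4\}$ with $1<2<3<4$, take the supersolvable matroid whose unique circuit is $\{1,2,4\}$ (so $3$ is a coloop); then $\{1,2,3,4\}$ is dependent, the only candidate blocker has broken part $\{2,4\}$, which is not convex in $\{2,3,4\}$, and hence $\{1,2,3,4\}\in\OMat $ although it is not a circuit. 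So the claim you need is order\--dependent and requires proof. It can be completed with your own tools: since each $F_i$ is a flat of rank $i$, any set meeting every block $B_i$ at most once is independent, so a dependent $J$ meets some $B_{k'}$ in at least two elements; $J\cap B_{k'}$ is convex in $J$ because $B_{k'}$ is an interval in your order, so it contains two elements $x<y$ consecutive in $J$, and your $T$\--construction applies verbatim provided $x\ne j_1$. The only obstructive configuration --- a unique doubled block, met by $J$ exactly in $\{j_1,j_2\}$ --- cannot occur: there $J\setminus\{j_1\}$ is an independent transversal, so every circuit inside $J$ contains $j_1$, and your (proved) statement for circuits forces such a circuit to have its top two elements equal to $\{j_1,j_2\}$ while also having $j_1$ as its minimum, i.e.\ to have only two elements, contradicting (C1). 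Without an argument of this kind the proof is incomplete.
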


  We postpone the derivation of this corollary till Section \ref{sec:proofs}.
  For $q = 0$, Theorem \ref{thm:main} states

  \begin{corollary}
    \label{cor:os}
    Let $\OMat $ be the set
    of Gr\"obner circuits of a loopless matroid $\Mat$ without parallel elements.
    Then the polynomials $t_s^2$ with $s \in S$, $t_rt_s+t_st_r$ with $r,s \in S$, $s\Sord r$ and
    $\sum_{\nu = 1}^{\#J} (-1)^{\nu -1} t_{J \setminus \{j_{\nu }\}}$, where
    $J \in \OMat $, form
    a Gr\"obner basis of the defining ideal $I_0(\Mat)$ of the Orlik-Solomon
    algebra of $\Mat$ with respect to the degree lexicographic order.
  \end{corollary}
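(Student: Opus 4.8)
The plan is to obtain Corollary \ref{cor:os} as the instance $q = 0$ of Theorem \ref{thm:main}, which is admissible since the theorem is stated for all choices of $q$ and $0 \in R$. The only thing that genuinely needs to be checked is that the generating set $\GB{\OMat}$ specializes at $q = 0$ to the list of polynomials in the corollary. First I would record that $t_s^2 - q \mapsto t_s^2$ and $t_r t_s + t_s t_r - 2q \mapsto t_r t_s + t_s t_r$, so \eqref{eq:ts2} and \eqref{eq:trts} give the first two families verbatim. For \eqref{eq:tJ} the point is that in $\tm_J = \sum_{I \subseteq J,\, 2 \nmid \#I} (-1)^{\ell_J(I)} (-q)^{(\#I - 1)/2} t_{J \setminus I}$ the exponent $(\#I - 1)/2$ is positive whenever $\#I \geq 3$, so at $q = 0$ only the summands with $I = \{j_\nu\}$ a singleton survive; using $\ell_J(\{j_\nu\}) = \nu - 1$, which is immediate from $\ell_J(I) = \sum_\mu (\alpha_\mu - \mu)$ applied to a one-element set with $\alpha_1 = \nu$, one gets $\tm_J \mapsto \sum_{\nu = 1}^{\#J} (-1)^{\nu - 1} t_{J \setminus \{j_\nu\}}$, exactly the third family.

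Having made this identification, I would apply Theorem \ref{thm:main} with $q = 0$: it yields $I_0(\Mat) = I_0(\OMat)$, so the listed polynomials generate $I_0(\Mat)$, and it asserts that $\GB{\OMat}$ is a Gr\"obner basis of $I_0(\Mat)$ with respect to the degree lexicographic order. To conclude, I would recall from the introduction that for a loopless matroid $\Mat$ without parallel elements the circuit set satisfies (C1)--(C3) and that $\dos_0(\Mat) = A / I_0(\Mat)$ is by definition the Orlik-Solomon algebra of $\Mat$, so $I_0(\Mat)$ is precisely its defining ideal; this gives the corollary.

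There is no genuine obstacle here beyond the bookkeeping of the specialization; if anything deserves to be spelled out it is the collapse of \eqref{eq:tJ} to its singleton summands together with the identity $\ell_J(\{j_\nu\}) = \nu - 1$. For the reader it may also be worth adding the (immediate) remark that setting $q = 0$ does not disturb the leading monomials $t_s^2$, $t_r t_s$ for $s \Sord r$, and $t_{J \setminus \{j_1\}}$, nor their leading coefficient $1$, so that the description of the standard monomials in Corollary \ref{cor:freemodule} carries over word for word and recovers the classical broken-circuit basis of the Orlik-Solomon algebra.
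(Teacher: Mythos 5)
Your proposal is correct and follows exactly the route the paper takes: Corollary \ref{cor:os} is obtained as the specialization $q=0$ of Theorem \ref{thm:main}, the only content being the observation that at $q=0$ the generators \eqref{eq:ts2}, \eqref{eq:trts} survive without the $q$-terms and that in $\tm_J$ only the singleton subsets $I=\{j_\nu\}$ contribute, with $\ell_J(\{j_\nu\})=\nu-1$ giving the sign $(-1)^{\nu-1}$. The paper states this without further argument ("For $q=0$, Theorem \ref{thm:main} states"), so your more explicit bookkeeping is a faithful, slightly expanded version of the intended proof.
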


  Since for $q = 0$ the quotient of $A$ by $t_s^2$ with $s \in S$, $t_rt_s+t_st_r$ with $r,s \in S$, $s\Sord r$
  is the exterior algebra $E$ we also get the following corollary from
  \cite[Prop. 9.3]{a-Mora94}.
  For its formulation we
  identify $t_J \in A$ for $J \subseteq S$ with its image in $E$.

  \begin{corollary}
    \label{cor:eos}
      Let $\OMat $ be the set of Gr\"obner circuits of a loopless matroid
      $\Mat$ without parallel elements.
      Then the polynomials 
      $\sum_{\nu = 1}^{\#J} (-1)^{\nu -1} t_{J \setminus \{j_{\nu }\}}$, where
      $J \in \Mat \cap \OMat$, form
      a Gr\"obner basis of the defining ideal of the Orlik-Solomon algebra in $E$.
  \end{corollary}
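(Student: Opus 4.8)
The plan is to derive the statement from Corollary~\ref{cor:os} by transporting the Gr\"obner basis found there to the quotient algebra $E$ via \cite[Prop.~9.3]{a-Mora94}, and then cutting the generating set down from $\OMat$ to $\Mat\cap\OMat$ by a combinatorial argument. Write $f_J:=\sum_{\nu=1}^{\#J}(-1)^{\nu-1}t_{J\setminus\{j_\nu\}}$ for $J\subseteq S$, i.e.\ the element \eqref{eq:tJ} with $q=0$, and let $I_E\subseteq A$ be the two-sided ideal generated by the $t_s^2$ ($s\in S$) and the $t_rt_s+t_st_r$ ($s\Sord r$), so that $E=A/I_E$. The set $\{t_s^2\mid s\in S\}\cup\{t_rt_s+t_st_r\mid s\Sord r\}$ is a Gr\"obner basis of $I_E$ for $\dlo$ (this is classical, and is the case $\Mat=\emptyset$ of Corollary~\ref{cor:os}), so the standard monomials of $E$ are precisely the $t_J$, $J\subseteq S$. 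By Corollary~\ref{cor:os}, $\GB{\OMat}$ at $q=0$ is the disjoint union of this quadratic Gr\"obner basis of $I_E$ with $\{f_J\mid J\in\OMat\}$ and is a Gr\"obner basis of $I_0(\Mat)$; applying \cite[Prop.~9.3]{a-Mora94} I conclude that the images of the $f_J$, $J\in\OMat$, in $E$ form a Gr\"obner basis of $I_0(\Mat)/I_E$, which is the defining ideal of the Orlik--Solomon algebra of $\Mat$ in $E$. Since every monomial occurring in $f_J$ is a standard monomial of $E$, and $J\setminus\{j_1\}$ is the $\dlo$-largest of the sets $J\setminus\{j_\nu\}$, the image of $f_J$ has leading monomial $t_{J\setminus\{j_1\}}$ with coefficient $1$; in particular the leading term ideal of the Orlik--Solomon ideal in $E$ is generated by $\{t_{J\setminus\{j_1\}}\mid J\in\OMat\}$.

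The crucial combinatorial claim is: for every $J\in\OMat$ there is a circuit $C\in\Mat\cap\OMat$ with $C\setminus\{\min C\}\subseteq J\setminus\{\min J\}$ (as sets). To prove it, choose $D\in\OMat$ with $D\setminus\{\min D\}\subseteq J\setminus\{\min J\}$ that is $\dlo$-minimal with this property. Such a $D$ exists: picking any circuit $C_0\subseteq J$ one has $C_0\setminus\{\min C_0\}\subseteq J\setminus\{\min J\}$, since $\min J\in C_0$ forces $\min J=\min C_0$; if $C_0\in\OMat$ then $C_0$ is a candidate, and if $C_0\notin\OMat$ then (GC) furnishes a Gr\"obner circuit $K\dlo C_0$ with $K\setminus\{\min K\}\sso C_0\setminus\{\min C_0\}\subseteq J\setminus\{\min J\}$, which is again a candidate. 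Now suppose the chosen $D$ is not a circuit. As $D\in\OMat$ is dependent, it properly contains a circuit $C'$, and one checks (distinguishing whether $\min D\in C'$) that $C'\setminus\{\min C'\}\subseteq J\setminus\{\min J\}$; since $C'\subsetneq D$ gives $C'\dlo D$, the case $C'\in\OMat$ contradicts the minimality of $D$, while if $C'\notin\OMat$ then (GC) yields $K'\in\OMat$ with $K'\dlo C'\dlo D$ and $K'\setminus\{\min K'\}\sso C'\setminus\{\min C'\}$, whence $K'$ is a smaller candidate, again a contradiction. Hence $D$ is a circuit and $D\in\Mat\cap\OMat$.

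To conclude, note that in $E$ one has $t_A=\pm t_{A\setminus B}\,t_B$ whenever $B\subseteq A\subseteq S$, so the standard monomial $t_A$ lies in the two-sided ideal generated by $t_B$. By the combinatorial claim, for each $J\in\OMat$ the leading monomial $t_{J\setminus\{j_1\}}$ of the image of $f_J$ therefore lies in the ideal generated by the leading monomial $t_{C\setminus\{\min C\}}$ of the image of $f_C$ for a suitable $C\in\Mat\cap\OMat$. Since, as established above, the leading term ideal of the Orlik--Solomon ideal in $E$ is generated by the $t_{J\setminus\{j_1\}}$ with $J\in\OMat$, it is already generated by the $t_{C\setminus\{\min C\}}$ with $C\in\Mat\cap\OMat$; and as the $f_C$ with $C\in\Mat\cap\OMat$ plainly lie in the Orlik--Solomon ideal, they form a Gr\"obner basis of it, which is the assertion. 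The only genuinely new input is the combinatorial claim, and its delicate point is keeping the containment $D\setminus\{\min D\}\subseteq J\setminus\{\min J\}$ intact while passing to $\dlo$-smaller dependent sets; the passage to the quotient via \cite[Prop.~9.3]{a-Mora94} and the leading-monomial computation are routine.
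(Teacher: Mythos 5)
Your proof is correct, and its first half --- producing a Gr\"obner basis of the Orlik--Solomon ideal in $E$ indexed by all of $\OMat$ by combining Corollary~\ref{cor:os} with \cite[Prop.~9.3]{a-Mora94} --- is exactly the paper's route; the paper's entire proof is the one sentence preceding the corollary. Where you go beyond the paper is the combinatorial claim that for every $J\in\OMat$ there is a circuit $C\in\Mat\cap\OMat$ with $C\setminus\{c_1\}\subseteq J\setminus\{j_1\}$. This step is genuinely needed and the paper elides it: Mora's proposition a priori yields the generating set indexed by all of $\OMat$, and since $\OMat$ may contain dependent sets that are not circuits, one must check that the leading terms of those extra elements become redundant in $E$, where divisibility of monomials is subset containment rather than the subword (convex-subset, $\sso$) containment that governs reductions in $A$. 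Your minimality argument handles this correctly: a $\dlo$-minimal $D\in\OMat$ with $D\setminus\{d_1\}\subseteq J\setminus\{j_1\}$ cannot fail to be a circuit, since a properly contained circuit $C'$ would produce --- either directly or via (GC) --- a strictly $\dlo$-smaller competitor. Two cosmetic remarks: the existence of a candidate $D$ is immediate (take $D=J$), so the detour through $C_0$ is unnecessary; and in that detour the justification ``since $\min J\in C_0$ forces $\min J=\min C_0$'' covers only one of the two cases (if $\min J\notin C_0$ the inclusion holds because $C_0\subseteq J\setminus\{\min J\}$). Neither affects correctness.
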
 

  Gr\"obner bases of the defining ideal of the Orlik-Solomon algebra inside
  the exterior algebra have been described
  previously (see for example \cite[Thm. 2.8]{a-Yuzvinsky01}, \cite{a-CordovilForge05},\cite{a-peeva03}). 

\section{Non-Commutative Gr\"obner Basics}
  \label{sec:basics}
  \label{sec:groebner}
  
  Recall that the \textit{degree lexicographic order} or \textit{deglex order} on the monomials in $A$ is the
  total order $\dlo$ such that for two monomials $t_{i_1} \cdots t_{i_k}$ and
  $t_{j_1} \cdots t_{j_l}$ in $A$ we have $t_{i_1} \cdots t_{i_k} \dlo t_{j_1} \cdots t_{j_l}$ 
  if and only if either $k\Sord l$ or $k=l$ and for some $0 \Sordeq h \Sord k$ we have
  $i_1 = j_1, \ldots , i_h = j_h$ and $i_{h+1} \Sord j_{h+1}$.
  Any $\xi \in A$ can uniquely be written
  as a polynomial of the form
  $f = c_1 m_1 +\cdots +c_jm_{j}$,
  for non-commutative monomials $m_j\dlo \cdots m_2\dlo m_1$ in the variables $t_s$, $s \in S$, and 
  ring elements $c_1,\ldots ,c_j\in R\setminus \{0\}$.
  In this polynomial, $c_1m_{1}$ is called the \textit{leading term},
  $c_1$ the \textit{leading coefficient} and
  $m_{1}$ the \textit{leading monomial} of $f$.
  We write $\lt(f)$ for the leading term, $\lm(f)$ for the leading monomial 
  and $\lc(f)$ for the leading coefficient of $f$. 
  The $m_1, \ldots, m_k$ are called the \textit{monomials of} $f$.
  In other words, the leading monomial is the largest monomial among all monomials
  of $f$ with respect to the deglex order. Further, for any monomial $m\in
  A$ there are only finitely many monomials $m'\in A$ such that $m'\dlo m$.

  Let $\cI $ be a set of elements of $A$ with leading
  coefficient $1$. A \textit{reduction} of a polynomial $f\in A$
  modulo $\cI $ is an expression obtained from $f$ by replacing
  the leading monomial $m$ of an element $g\in \cI $,
  appearing as a subword of one of the monomials of $f$,
  by $m-g$. By construction, a reduction does not have
  monomials larger than the leading monomial of $f$.
  For any $f,g\in A$
  we say that $f$ \textit{reduces to} $g$ (modulo $\cI $) and write
  \begin{align}
    \label{eq:red}
    f\reduc{\cI}g
  \end{align}
  if there is a sequence of expressions
  $f=f_0,f_1,\dots ,f_k=g$, where $k\in \ndN _0$, such that
  $f_{i+1}$ is a reduction of $f_i$ for all $i\in \{0,1,\dots,k-1\}$.

  A subset $G$ of a two-sided ideal $I$ in $A$ 
  is called a \textit{Gr\"obner basis} of $I$ if
  the two-sided ideal generated by $\{\lt(g)\,|\,g \in G\}$ coincides with the two-sided
  ideal generated by $\{\lt(f)\,|\,f \in I\}$.

  For two polynomials $f$, $g$ in $A$ with $\lc(f) = \lc(g) = 1$ an 
  $S$-\textit{polynomial of}
  $(f,g)$ is any non-zero expression $m_1fm_2-n_1gn_2\in A$
  for monomials $m_1,m_2,n_1,n_2$ 
  such that
  \begin{align}
    m_1\lm(f)m_2 = n_1\lm(g)n_2.
    \label{eq:Spol}
  \end{align}
  Let $J_{f,g}$ be the submodule of the $A$-bimodule 
  $(A \oplus A) \otimes _R(A \oplus A)$
  generated by the tensors $(m_1,n_1)  \otimes (m_2,n_2)$ for monomials
  $m_1,m_2,n_1,n_2$ for which 
  \eqref{eq:Spol} holds.
  Being generated by tensors of pairs of monomials
  there is a unique inclusionwise minimal set of generators of $J_{f,g}$
  consisting of tensors of pairs of monomials. It is easily seen that any of the
  generators will be of the form 
  $(m,1) \otimes (1,n)$,
  $(1,n) \otimes (m,1)$,
  $(1,n_1) \otimes (1,n_2)$ or
  $(m_1,1) \otimes (m_2,1)$.
  The criterion from the following theorem will be employed in order to derive Theorem~\ref{thm:main}.
 
  \begin{theorem} \label{thm:Gcrit}
     Let $R$ be a field and $I$ a two-sided ideal of $A$.
     A set $\cI := \{ f_1, \ldots, f_r \} \subseteq I$
     is a Gr\"obner basis for $I$ if and only
     if for all $1 \leq i \leq j \leq r$ and for any minimal generator 
     $(m_1,n_1) \otimes (m_2,n_2)$ of $J_{f_i,f_j}$ the corresponding $S$-polynomial of $(f_i,f_j)$
     reduces to $0$ modulo $\cI$.
  \end{theorem}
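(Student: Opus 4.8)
The plan is to establish both implications of this criterion; it is the two-sided analogue of Buchberger's $S$-polynomial criterion (a form of Bergman's diamond lemma, cf.\ \cite{a-Mora94}), and the argument is the classical one. I will use throughout two elementary properties of the deglex order: it is a well-order on monomials (recalled above), and it is multiplicative, $m\dlo m'$ implying $amb\dlo am'b$ for monomials $a,b$; in particular a reduction never raises the leading monomial. I will also use that a Gr\"obner basis of $I$ generates $I$ as a two-sided ideal — this is recovered in the ``only if'' direction by the reduction loop below, and in the ``if'' direction it is part of the hypothesis.

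\emph{Only if.} Let $\cI$ be a Gr\"obner basis of $I$, fix $i\le j$ and a minimal generator $(m_1,n_1)\otimes(m_2,n_2)$ of $J_{f_i,f_j}$, and put $p=m_1f_im_2-n_1f_jn_2$; if $p=0$ there is nothing to check. As $\cI$ is a Gr\"obner basis, $\lt(p)$ lies in the two-sided ideal generated by $\{\lm(g)\mid g\in\cI\}$, so $\lm(p)=a\,\lm(g)\,b$ for some $g\in\cI$ and monomials $a,b$; replacing this occurrence of $\lm(g)$ by $\lm(g)-g$ is a reduction yielding $p-\lc(p)\,a\,g\,b\in I$ with strictly smaller leading monomial. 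Iterating and using that $\dlo$ is a well-order, this terminates, necessarily at $0$; thus $p\reduc{\cI}0$. (The same loop applied to an arbitrary $f\in I$ writes $f$ in terms of $\cI$, so $\cI$ generates $I$.)

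\emph{If.} Assume every $S$-polynomial attached to a minimal generator reduces to $0$ modulo $\cI$; I will show that $\lt(f)$ lies in the two-sided ideal generated by $\{\lt(f_i)\}$ for every nonzero $f\in I$, which together with $\cI\subseteq I$ gives the equality of leading-term ideals. Since $\cI$ generates $I$, write $f=\sum_k c_k\,m_k\,f_{i_k}\,m_k'$ with $c_k\in R\setminus\{0\}$ and monomials $m_k,m_k'$, and attach the pair $(M,N)$, where $M=\max_k m_k\lm(f_{i_k})m_k'$ in deglex and $N=\#\{k: m_k\lm(f_{i_k})m_k'=M\}$; choose an expression minimizing $(M,N)$ lexicographically. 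If $N=1$ the top summand cannot be cancelled, so $\lm(f)=M$ and $\lt(f)=c_{k_0}\,m_{k_0}\,\lt(f_{i_{k_0}})\,m_{k_0}'$ is of the required form. To rule out $N\ge 2$, pick $k_1\ne k_2$ both attaining $M$ (renumbered so $i_{k_1}\le i_{k_2}$): the coincidence $m_{k_1}\lm(f_{i_{k_1}})m_{k_1}'=m_{k_2}\lm(f_{i_{k_2}})m_{k_2}'=M$ is an overlap of leading monomials, which by the structure of monomial overlaps factors through a minimal generator $(a_1,b_1)\otimes(a_2,b_2)$ of $J_{f_{i_{k_1}},f_{i_{k_2}}}$ and monomials $u,v$ with $m_{k_1}=ua_1$, $m_{k_1}'=a_2v$, $m_{k_2}=ub_1$, $m_{k_2}'=b_2v$. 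Its $S$-polynomial $p=a_1f_{i_{k_1}}a_2-b_1f_{i_{k_2}}b_2$ has cancelling leading terms, so $\lm(p)\dlo a_1\lm(f_{i_{k_1}})a_2$; by hypothesis $p\reduc{\cI}0$, and since every reduction step subtracts a term $c\,\alpha g\beta$ with $\alpha\lm(g)\beta$ a monomial of the current polynomial (hence $\dloeq\lm(p)$), we obtain $p=\sum_l d_l\,\alpha_l\,f_{j_l}\,\beta_l$ with $\alpha_l\lm(f_{j_l})\beta_l\dloeq\lm(p)\dlo a_1\lm(f_{i_{k_1}})a_2$. Since $m_{k_1}f_{i_{k_1}}m_{k_1}'=ua_1f_{i_{k_1}}a_2v$ and likewise for $k_2$,
\[ c_{k_1}m_{k_1}f_{i_{k_1}}m_{k_1}'+c_{k_2}m_{k_2}f_{i_{k_2}}m_{k_2}'=(c_{k_1}+c_{k_2})\,m_{k_2}f_{i_{k_2}}m_{k_2}'+c_{k_1}\sum_l d_l\,(u\alpha_l)\,f_{j_l}\,(\beta_l v), \]
where $(u\alpha_l)\lm(f_{j_l})(\beta_l v)=u(\alpha_l\lm(f_{j_l})\beta_l)v\dlo M$ and the merged summand lies at level $\dloeq M$. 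Substituting back yields an expression for $f$ with lexicographically smaller $(M',N')$: $M$ drops if $c_{k_1}+c_{k_2}=0$, and otherwise $M$ stays but the number of summands at level $M$ drops (the two summands $k_1,k_2$ being replaced by one). This contradicts minimality, so $N=1$.

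The only delicate point is the bookkeeping at the end: reading a given coincidence of two leading monomials off as a minimal generator of the overlap module $J_{f_i,f_j}$ together with the correct one-sided cofactors $u,v$, and then checking that the rewrite strictly lowers the complexity $(M,N)$ while introducing only monomials strictly below $M$ — the latter relying on the fact that the reductions witnessing $p\reduc{\cI}0$ never involve a monomial exceeding $\lm(p)\dlo M$. The field hypothesis on $R$ enters only through $c_{k_0}\ne 0$ and the handling of $c_{k_1}+c_{k_2}$; as all leading coefficients equal $1$, the argument in fact works over any commutative ring, but a field is all we need here.
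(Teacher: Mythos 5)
Your proof is correct, and it is worth noting at the outset that the paper does not prove Theorem~\ref{thm:Gcrit} at all: it is imported as a known result from non-commutative Gr\"obner basis theory (Mora's work, cited for the companion Lemma~\ref{le:simp}), so any complete argument you give is necessarily ``different'' from the paper's treatment. What you supply is the standard diamond-lemma/Buchberger argument: the ``only if'' direction by reducing the leading term of an ideal element and invoking well-foundedness of $\dlo$, and the ``if'' direction by taking a representation $f=\sum_k c_k m_k f_{i_k}m_k'$ minimizing the pair $(M,N)$, resolving a top-level coincidence through a minimal generator of $J_{f_i,f_j}$ flanked by monomial cofactors $u,v$, and using the hypothesis $p\reduc{\cI}0$ to rewrite below level $M$. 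The bookkeeping you flag as delicate is handled correctly: multiplicativity of deglex, the fact that reductions never introduce monomials above the leading one, and the two cases $c_{k_1}+c_{k_2}=0$ versus $\neq 0$. One point in your favor deserves emphasis rather than suspicion: as literally stated the ``if'' direction is false without assuming that $\cI$ generates $I$ (take $\cI=\{t_1\}$ inside $I=(t_1,t_2)$; the $S$-polynomial condition holds vacuously but $t_2$ is not in the leading-term ideal of $\cI$), so your explicit insertion of that hypothesis is the correct reading, and it is satisfied in the paper's application where $I=I_q(\OMat)$ is by definition generated by $\GB{\OMat}$. Your closing remark that the argument survives over a commutative ring because all leading coefficients are $1$ matches the paper's own caveat following Lemma~\ref{le:simp}.
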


  It is possible to simplify the Gr\"obner basis criterion in Theorem~\ref{thm:Gcrit}
  by using the following fact \cite[Cor.\,5.8]{a-Mora94}.

  \begin{lemma}
    \label{le:simp}
    Let $f,g\in \cI$. Then the $S$-polynomials of $(f,g)$ corresponding to the
    generators $(\lm (g)\,m,1)\otimes (1,m\,\lm (f))$ and
    $(1,\lm (f)\,m)\otimes (m\,\lm (g),1)$ of $J_{f,g}$, where $m$ is an arbitrary monomial,
    reduce to $0$ modulo $\cI $.
    \label{lem:trivialSpol}
  \end{lemma}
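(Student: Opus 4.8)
The plan is to carry out a direct computation: write down the two $S$-polynomials, decompose $f$ and $g$ into their leading monomials plus lower-order tails, observe that the leading parts cancel, and then rewrite each surviving piece by reducing it modulo the \emph{other} generator until the two pieces coincide and cancel.

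Concretely, recall that every element of $\cI$ is monic, and set $f' := \lm(f)-f$ and $g' := \lm(g)-g$, so that all monomials of $f'$ are $\dlo \lm(f)$ and all monomials of $g'$ are $\dlo \lm(g)$. For the generator $(\lm(g)\,m,1)\otimes(1,m\,\lm(f))$ of $J_{f,g}$ the associated $S$-polynomial is
\[
 S_1 \;=\; \lm(g)\,m\,f \;-\; g\,m\,\lm(f).
\]
Substituting $f=\lm(f)-f'$ and $g=\lm(g)-g'$, the two copies of $\lm(g)\,m\,\lm(f)$ cancel and one is left with $S_1 = g'\,m\,\lm(f)-\lm(g)\,m\,f'$. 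Every monomial of $g'\,m\,\lm(f)$ ends in the subword $\lm(f)$, hence is reducible modulo $f$; replacing that suffix by $\lm(f)-f$ turns $g'\,m\,\lm(f)$ into $g'\,m\,f'$. Dually, every monomial of $\lm(g)\,m\,f'$ begins with $\lm(g)$, hence is reducible modulo $g$; replacing that prefix by $\lm(g)-g=g'$ turns $\lm(g)\,m\,f'$ into $g'\,m\,f'$. Applying these rewrites to $S_1$ therefore produces $g'\,m\,f'-g'\,m\,f'=0$, so $S_1\reduc{\cI}0$. The generator $(1,\lm(f)\,m)\otimes(m\,\lm(g),1)$ is handled by the mirror computation: there the $S$-polynomial is $S_2=f\,m\,\lm(g)-\lm(f)\,m\,g=\lm(f)\,m\,g'-f'\,m\,\lm(g)$, and reducing $\lm(f)\,m\,g'$ modulo $f$ and $f'\,m\,\lm(g)$ modulo $g$ gives $f'\,m\,g'-f'\,m\,g'=0$, so $S_2\reduc{\cI}0$ as well.

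The one step that is not purely formal, and hence the main obstacle, is checking that the indicated family of rewrites constitutes a genuine sequence of single reductions of the whole polynomial: a reduction acts on just one occurrence of a leading monomial inside one monomial of the current polynomial, so one has to order the steps (for instance by decreasing value of the monomial being rewritten) and verify that the strictly smaller monomials fed in from the tails $f'$ and $g'$ neither block a rewrite still to be performed nor interfere with the accounting of coefficients; since reductions never increase monomials in the deglex order this bookkeeping goes through, and the cumulative effect is exactly the subtraction of $g'\,m\,f$ from $g'\,m\,\lm(f)$ (respectively of $(\lm(g)-g)\,m\,f'$ from $\lm(g)\,m\,f'$) recorded by the formal identities above. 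This is precisely Bergman's remark that non-overlapping ambiguities are always resolvable, and it is the content of \cite[Cor.\,5.8]{a-Mora94}; in the write-up one may either spell out this ordering argument or simply invoke that corollary.
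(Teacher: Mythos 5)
The paper offers no argument for this lemma at all: it is stated as a fact imported from \cite[Cor.\,5.8]{a-Mora94}. Your proposal instead carries out the computation behind that citation, and the computation itself is correct: for the first generator the $S$-polynomial is indeed $\lm(g)\,m\,f-g\,m\,\lm(f)=g'\,m\,\lm(f)-\lm(g)\,m\,f'$ with $f'=\lm(f)-f$, $g'=\lm(g)-g$, every monomial of the first summand carries $\lm(f)$ as a suffix and every monomial of the second carries $\lm(g)$ as a prefix, and the indicated rewrites formally send both to $g'\,m\,f'$; the mirror case is identical. What this buys over the paper is a self-contained argument; what it costs is that you must justify the one step you yourself flag, and that step is the entire content of the cited corollary. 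The subtlety is that "reduce every monomial of $g'\,m\,\lm(f)$ modulo $f$ and every monomial of $\lm(g)\,m\,f'$ modulo $g$" is a family of formal term rewrites, whereas a reduction in the sense of Section 2 acts on one monomial of the \emph{current} polynomial with its \emph{current} coefficient. A tail monomial $w\,m\,v$ (with $w$ a monomial of $g'$ and $v$ a monomial of $f'$) produced by an early reduction can coincide, as a word, with a monomial $w'\,m\,\lm(f)$ or $\lm(g)\,m\,v'$ that you still intend to reduce, altering or even annihilating its coefficient; so the cumulative effect need not be literally "subtract $g'\,m\,f$". Processing monomials in decreasing deglex order does resolve this (a reduction removes the targeted monomial entirely and introduces only strictly smaller ones, so a downward induction on the largest unprocessed monomial closes the argument), but as written this is asserted rather than proved. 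Since the paper itself delegates exactly this point to Mora, either spell out that induction or keep the citation; with one of the two in place the proof is complete.
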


  We will apply Theorem \ref{thm:Gcrit} and Lemma \ref{le:simp} in a situation where $R$ is not
  necessarily a field. But since all our polynomials have leading coefficient $1$ and since 
  all reductions only use coefficients $\pm 1$ the assertions remain valid. 

\section{Technical Lemmas} \label{sec:technical}

  \subsection{General set systems}

  In this section we collect some useful formulas which are valid for
  arbitrary set systems $\Mat $ over $S$.

Generalizing the notation in the introduction, for all
$J\subseteq S$ let
\begin{align*}
  \tm_J =\sum _{I\subseteq J,2\nmid \#I} (-1)^{\ell _J(I)}
       (-q)^{(\#I-1)/2}t_{J\setminus I},\quad
  \tp_J =\sum _{I\subseteq J,2\mid \#I} (-1)^{\ell _J(I)}
       (-q)^{\#I/2}t_{J\setminus I}.
\end{align*}

We start with deriving formulas which are valid in $A$.

\begin{lemma}
  \label{le:tptm}
  Let $J,J',J''\subseteq S$ such that $J=J'\cup J''$ and $j'\Sord j''$
  for all $j'\in J'$, $j''\in J''$.
  Then in $A$ equations
  \begin{align}
    \label{eq:tpdec}
    \tp _J=&\;\tp _{J'}\tp _{J''}+(-1)^{\#J'}q\tm _{J'}\tm _{J''},\\
    \label{eq:tmdec}
    \tm _J=&\;\tm _{J'}\tp _{J''}+(-1)^{\#J'}\tp _{J'}\tm _{J''}
  \end{align}
  hold.
\end{lemma}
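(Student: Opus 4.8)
The plan is to expand both sides of \eqref{eq:tpdec} and \eqref{eq:tmdec} as $R$-linear combinations of the monomials $t_L$, $L\subseteq S$, and to compare coefficients term by term. Since $j'\Sord j''$ for all $j'\in J'$, $j''\in J''$, the sets $J'$ and $J''$ are disjoint, $J'$ consists of the first $\#J'$ elements of $J$ and $J''$ of the last $\#J''$. Hence $I\mapsto (I',I'')$ with $I':=I\cap J'$ and $I'':=I\cap J''$ is a bijection from the subsets of $J$ onto the pairs $(I',I'')$ with $I'\subseteq J'$, $I''\subseteq J''$, and $J\setminus I=(J'\setminus I')\cup(J''\setminus I'')$ with every element of the first part below every element of the second, so that $t_{J\setminus I}=t_{J'\setminus I'}\,t_{J''\setminus I''}$ in $A$. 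Thus each product of monomials occurring on a right-hand side matches a unique monomial $t_{J\setminus I}$ on the corresponding left-hand side, and only the scalar coefficients (signs and powers of $q$) must be reconciled.

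The combinatorial core is the identity
\[
  \ell_J(I)=\ell_{J'}(I')+\ell_{J''}(I'')+\#I''\cdot\bigl(\#J'-\#I'\bigr)
\]
for all $I\subseteq J$. To prove it, set $p:=\#J'$ and write $I=\{j_{\alpha_1}\Sord\cdots\Sord j_{\alpha_k}\}$, $a:=\#I'$, so $\alpha_1,\dots,\alpha_a\le p<\alpha_{a+1},\dots,\alpha_k$. Inside $J'$ the elements of $I'$ still carry the indices $\alpha_1,\dots,\alpha_a$, while inside $J''$ the element $j_{\alpha_{a+\mu}}$ has index $\alpha_{a+\mu}-p$. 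Substituting into the definition of $\ell$ and subtracting, the $\alpha_\nu$-contributions cancel and there remains $\sum_{\mu=1}^{k-a}(p-a)=(k-a)(p-a)$, the asserted correction. Applying $(-1)^{(\cdot)}$ gives $(-1)^{\ell_J(I)}=(-1)^{\ell_{J'}(I')}(-1)^{\ell_{J''}(I'')}(-1)^{\#I''\cdot\#(J'\setminus I')}$.

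With this identity the proof is a finite parity check. For \eqref{eq:tpdec}, $\#I$ is even, so $\#I'$ and $\#I''$ have equal parity: if both are even, the exponent of $-q$ splits as $\#I/2=\#I'/2+\#I''/2$ and the correction sign is $+1$, giving the product of the matching terms of $\tp_{J'}$ and $\tp_{J''}$; if both are odd, one writes $(-q)^{\#I/2}=(-q)\,(-q)^{(\#I'-1)/2}(-q)^{(\#I''-1)/2}$, the correction exponent $\#I''\cdot\#(J'\setminus I')$ is $\equiv\#J'+1\pmod 2$, and the spare $(-q)$ combined with the resulting sign $-(-1)^{\#J'}$ yields exactly $(-1)^{\#J'}q$ times the product of the matching terms of $\tm_{J'}$ and $\tm_{J''}$; summing over $I$ gives \eqref{eq:tpdec}. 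For \eqref{eq:tmdec}, $\#I$ is odd, so exactly one of $\#I',\#I''$ is odd: the case $\#I'$ odd, $\#I''$ even has correction exponent even and produces the term of $\tm_{J'}\tp_{J''}$, while $\#I'$ even, $\#I''$ odd has correction exponent $\equiv\#J'\pmod 2$ and produces the term of $(-1)^{\#J'}\tp_{J'}\tm_{J''}$; in both cases the powers of $-q$ add up correctly. The only delicate point — and the place where a sign slip is easiest — is precisely the reduction modulo $2$ of the correction exponent $\#I''\cdot\#(J'\setminus I')$ in the two "odd'' cases; everything else is routine substitution. (Alternatively one could induct on $\#J''$, reducing to $\#J''=1$, but the direct bijective count seems cleanest.)
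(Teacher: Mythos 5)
Your proof is correct, and it takes a genuinely different route from the paper's. You expand both sides monomial by monomial, using that $I\mapsto(I\cap J',I\cap J'')$ is a bijection and that $t_{J\setminus I}=t_{J'\setminus I'}t_{J''\setminus I''}$, and you reconcile the scalars via the splitting identity $\ell_J(I)=\ell_{J'}(I')+\ell_{J''}(I'')+\#I''\cdot\#(J'\setminus I')$; I checked that identity and the subsequent parity bookkeeping (in particular the reductions of the correction exponent modulo $2$ in the two ``odd'' cases, which give $-(-1)^{\#J'}$ together with the spare factor $-q$ for \eqref{eq:tpdec}, and $(-1)^{\#J'}$ for \eqref{eq:tmdec}), and they are all right. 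The paper instead proceeds by induction on $\#J'$: the only coefficient computation is the case $J'=\{j_1\}$, done by splitting the defining sum of $\tp_J$ (resp.\ $\tm_J$) according to whether $j_1\in I$, which yields $\tp_J=t_{j_1}\tp_{J''}-q\tm_{J''}$ and $\tm_J=\tp_{J''}-t_{j_1}\tm_{J''}$, and the general case then follows from the induction hypothesis and associativity of $A$ --- essentially the inductive alternative you mention in your closing parenthesis (peeling off one element of $J'$ rather than of $J''$). Your approach buys an explicit, reusable formula for how $\ell_J$ behaves under order-compatible splittings and treats both identities uniformly in a single pass; the paper's approach avoids all but the simplest sign computation at the cost of an induction.
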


\begin{proof}
  We proceed by induction on $\#J'$.
  Since $\tp _\emptyset =1$ and $\tm _\emptyset=0$, the claim holds for
  $J'=\emptyset $. Assume now that $\#J\ge 1$,
  $J'=\{j_1\}$ and $J''=J\setminus \{j_1\}$. Then
  \begin{align*}
    \tp _J=&\;\sum _{I\subseteq J,2\mid \#I} (-1)^{\ell _J(I)}
     (-q)^{\#I/2}t_{J\setminus I}\\
     =&\;\sum _{I\subseteq J,2\mid \#I,j_1\in I} (-1)^{\ell _J(I)}
     (-q)^{\#I/2}t_{J\setminus I}
     +\sum _{I\subseteq J,2\mid \#I,j_1\notin I} (-1)^{\ell _J(I)}
     (-q)^{\#I/2}t_{J\setminus I}\\
     =&\;\sum _{L\subseteq J'',2\nmid \#L} (-1)^{\ell _{J''}(L)}
     (-q)^{(\#L+1)/2}t_{J''\setminus L}
     +\sum _{L\subseteq J'',2\mid \#L} (-1)^{\ell _{J''}(L)+\#L}
     (-q)^{\#L/2}t_{j_1}t_{J''\setminus L}\\
     =&\;-q\tm _{J''}+t_{j_1}\tp _{J''}
     =\tp _{J'}\tp _{J''}-q\tm _{J'}\tm _{J''}.
  \end{align*}
  Similarly,
  \begin{align*}
    \tm _J=&\;\tp _{J''}-t_{j_1}\tm _{J''}
     =\tm _{J'}\tp _{J''}-\tp _{J'}\tm _{J''}.
  \end{align*}
  The rest follows from the induction hypothesis and
  the associativity law of $A$.
\end{proof}

\begin{lemma} \label{le:tmtm}
  Let $K,L\subseteq S$ such that $k\Sord l$ for all $k\in K$, $l\in L$.
  Then in $A$ we have
  \begin{align}
    \tm _K\tm _L=\sum _{i=1}^{\#K} (-1)^{\#K-i}\tm _{L\cup K\setminus \{k_i\}}
    =\sum _{i=1}^{\#L}(-1)^{i-1}\tm _{K\cup L\setminus \{l_i\}}.
    \label{eq:tmtm}
  \end{align}
\end{lemma}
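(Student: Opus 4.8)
The plan is to prove the two claimed expansions by induction on $\#K$ and $\#L$ respectively, using the decomposition formula \eqref{eq:tmdec} of Lemma \ref{le:tptm} as the main engine. First I would establish the left-hand equality $\tm_K\tm_L=\sum_{i=1}^{\#K}(-1)^{\#K-i}\tm_{L\cup K\setminus\{k_i\}}$ by induction on $\#K$. The base case $\#K=1$ reads $\tm_{\{k_1\}}\tm_L=t_{k_1}\tm_L$, which is exactly the case $J'=\{k_1\}$, $J''=L$ of \eqref{eq:tmdec} after observing that $\tp_{\{k_1\}}=t_{k_1}$ and $\tm_{\{k_1\}}=1$; the right-hand side collapses to the single term $\tm_{L\cup\{k_1\}\setminus\{k_1\}}$... wait, more carefully the base case should be handled directly from the definitions or from the $\#J'=1$ computation already carried out in the proof of Lemma \ref{le:tptm}. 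For the inductive step, write $K=K'\cup\{k_{\#K}\}$ where $K'$ consists of the first $\#K-1$ elements, apply \eqref{eq:tmdec} to rewrite $\tm_K=\tm_{K'}\tp_{\{k_{\#K}\}}+(-1)^{\#K-1}\tp_{K'}\tm_{\{k_{\#K}\}}$ — but since the splitting point must separate $K$ from $L$, it is cleaner to split $K\cup L$ as $K'\sqcup(\{k_{\#K}\}\cup L)$ is not order-compatible.

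So instead I would split at the bottom: write $K=\{k_1\}\cup \widehat K$ with $\widehat K=K\setminus\{k_1\}$, and apply \eqref{eq:tmdec} with $J'=\{k_1\}$, $J''=\widehat K\cup L$ (which is order-compatible since $k_1$ precedes everything). This gives $\tm_{K\cup L}=\tm_{\{k_1\}}\tp_{\widehat K\cup L}+(-1)\tp_{\{k_1\}}\tm_{\widehat K\cup L}=\tp_{\widehat K\cup L}-t_{k_1}\tm_{\widehat K\cup L}$, and similarly $\tm_K=\tp_{\widehat K}-t_{k_1}\tm_{\widehat K}$. The strategy is then to compute $\tm_K\tm_L=(\tp_{\widehat K}-t_{k_1}\tm_{\widehat K})\tm_L=\tp_{\widehat K}\tm_L-t_{k_1}\tm_{\widehat K}\tm_L$, apply the induction hypothesis to $\tm_{\widehat K}\tm_L$, and separately handle $\tp_{\widehat K}\tm_L$ using \eqref{eq:tmdec} in the form $\tm_{\widehat K\cup L}=\tm_{\widehat K}\tp_L+(-1)^{\#\widehat K}\tp_{\widehat K}\tm_L$, so that $\tp_{\widehat K}\tm_L=(-1)^{\#\widehat K}(\tm_{\widehat K\cup L}-\tm_{\widehat K}\tp_L)$. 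The term $\tm_{\widehat K}\tp_L$ would in turn be expanded by a companion identity (a $\tm\tp$ analogue of \eqref{eq:tmtm}) or folded into a simultaneous induction; combining all pieces and reindexing the sums should produce the claimed telescoping expression $\sum_{i=1}^{\#K}(-1)^{\#K-i}\tm_{L\cup K\setminus\{k_i\}}$.

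The second equality, $\sum_{i=1}^{\#K}(-1)^{\#K-i}\tm_{L\cup K\setminus\{k_i\}}=\sum_{i=1}^{\#L}(-1)^{i-1}\tm_{K\cup L\setminus\{l_i\}}$, is a purely formal identity among the $\tm_M$ for $M\subseteq K\cup L$ of size $\#K+\#L-1$; I would prove it by a symmetric induction on $\#L$ (mirroring the argument for the first equality but peeling off the top element $l_{\#L}$ of $L$), or alternatively derive it by applying the already-proved first equality to $\tm_L\tm_K$ after noting that the hypothesis $k\Sord l$ forces $\tm_K\tm_L$ and $\tm_L\tm_K$ to be related through a parity-sign-reversal coming from \eqref{eq:trts}-type anticommutation — though that route risks circularity, so the clean induction peeling off $l_{\#L}$ is safer.

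The main obstacle I anticipate is bookkeeping the signs and the index shifts: when $k_1$ is removed from $K$ and then the induction hypothesis reindexes a sum over $\widehat K=\{k_2,\dots,k_{\#K}\}$, the exponents $(-1)^{\#K-i}$ must be matched against $(-1)^{\#\widehat K-(i-1)}$, and the extra terms coming from $\tp_{\widehat K}\tm_L$ (the "$i=1$" boundary term $\tm_{L\cup K\setminus\{k_1\}}$) must appear with exactly the right coefficient for the telescoping to close. Keeping the $\tp_J$-terms under control — they must ultimately cancel or recombine into $\tm$-terms — is the delicate part; everything else is a routine, if lengthy, manipulation built on Lemma \ref{le:tptm} and associativity in $A$.
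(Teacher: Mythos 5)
Your overall strategy (induction on $\#K$ driven by Lemma~\ref{le:tptm}) is the same as the paper's, but your choice of where to split $K$ creates a genuine gap. By peeling off the \emph{bottom} element $k_1$ you are forced to confront the product $\tp_{K\setminus\{k_1\}}\tm_L$, and after substituting $\tp_{K\setminus\{k_1\}}\tm_L=(-1)^{\#K-1}\bigl(\tm_{(K\cup L)\setminus\{k_1\}}-\tm_{K\setminus\{k_1\}}\tp_L\bigr)$ and applying the induction hypothesis to $t_{k_1}\tm_{K\setminus\{k_1\}}\tm_L$, the surviving obstruction is exactly the identity
\begin{align*}
\tm_{K\setminus\{k_1\}}\,\tp_L \;=\; \sum_{i=2}^{\#K}(-1)^{i}\,\tp_{(K\cup L)\setminus\{k_1,k_i\}},
\end{align*}
which is the ``companion identity'' you invoke but do not state precisely or prove. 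It is true, but it is not a triviality: it needs its own induction, so your argument is really a simultaneous induction on two families of identities, only one of which is addressed. The paper avoids this entirely by peeling off the \emph{top} element of $K$ and absorbing it into $L$: with $K'=K\setminus\{k_{\#K}\}$ and $L'=\{k_{\#K}\}\cup L$, Lemma~\ref{le:tptm} gives $\tm_K\tm_L=-\tm_{K'}\tm_{L'}+\tm_{(K\cup L)\setminus\{k_{\#K}\}}$, so the recursion closes on $\tm\cdot\tm$ products alone, no $\tp$-terms ever appear, and the telescoping is immediate.

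For the second equality, your fallback (a second induction peeling off $l_{\#L}$) would work but is unnecessary, and your other suggestion via \eqref{eq:trts}-type anticommutation is not merely ``risking circularity'' --- it is invalid, since the lemma is an identity in the free algebra $A$, where $\tm_K\tm_L$ and $\tm_L\tm_K$ are not related by a sign. The paper's device is to specialize $L=\emptyset$ in the already-proved first equality, which yields $0=\sum_{i=1}^{\#J}(-1)^{\#J-i}\tm_{J\setminus\{j_i\}}$ for every $J\subseteq S$; applied to $J=K\cup L$, this single relation converts the sum over deletions from $K$ into the sum over deletions from $L$ after adjusting by the global sign $(-1)^{\#L}$. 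You should either adopt that shortcut or carry out the second induction in full, and in either case supply a proof of the $\tm\tp$ companion identity (or switch to the top-element decomposition, which makes it unnecessary).
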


\begin{proof}
  We prove the first equality by induction on $\#K$. If $K=\emptyset $ then
  both sides of the equality are zero. Assume now that $K\not=\emptyset $ and
  let $n=\#K$, $K'=K\setminus \{k_n\}$ and $L'=\{k_n\}\cup L$.
  Then Lemma~\ref{le:tptm} applied three times implies that
  \begin{align*}
    \tm _K\tm _L=&\;(\tm _{K'}t_{k_n}+(-1)^{n-1}\tp _{K'})\tm _L\\
    =&\;\tm _{K'}(-\tm _{L'}+\tp _L)+(-1)^{n-1}\tp _{K'}\tm _L\\
    =&\;-\tm _{K'}\tm _{L'}+\tm _{L\cup K\setminus \{k_n\}}
  \end{align*}
  from which the first equation follows from the induction hypothesis.
  In particular, for $L=\emptyset $ we obtain that
  $0=\sum _{i=1}^{\#J}(-1)^{\#J-i}\tm _{J\setminus \{j_i\}}$ for all
  $J\subseteq S$.
  This implies that the second and the third expression in \eqref{eq:tmtm}
  coincide.
\end{proof}

\begin{lemma}
  \label{le:tmspec1}
  Let $J,L\subseteq S$ such that $n:=\#J\ge 2$, $J\setminus \{j_1,j_n\}=L\setminus
  \{l_1\}$ and $j_1\Sord l_1$.
  Then in $A$ we have
  \begin{align} \label{eq:tmspec1}
    \tm _J-\tm _Lt_{j_n} -\sum _{i=2}^{n-1}(-1)^i\tm _{(J\cup L)\setminus \{j_i\}}
    +(-1)^nt_{j_1}\tm _L =0.
  \end{align}
\end{lemma}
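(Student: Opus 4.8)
The plan is to reduce everything to the two decomposition formulas of Lemma~\ref{le:tptm} together with the product formula of Lemma~\ref{le:tmtm}, exactly as in the proof of Lemma~\ref{le:tmtm}. Write $J' = J \setminus \{j_n\}$ and $L' = L \setminus \{l_1\}$, so that $J' \setminus \{j_1\} = L'$ by hypothesis. First I would apply \eqref{eq:tmdec} of Lemma~\ref{le:tptm} with the splitting $J = J' \cup \{j_n\}$ (legitimate since $j_n$ is the largest element of $J$), getting $\tm_J = \tm_{J'}t_{j_n} + (-1)^{n-1}\tp_{J'}t_{j_n}$... more precisely $\tm_J = \tm_{J'}\tp_{\{j_n\}} + (-1)^{\#J'}\tp_{J'}\tm_{\{j_n\}} = \tm_{J'} + (-1)^{n-1}\tp_{J'}t_{j_n}$, since $\tp_{\{j_n\}}=1$ and $\tm_{\{j_n\}}=t_{j_n}$. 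Wait — I must be careful: $\tp_{\{j_n\}} = 1$? No: $\tp_{\{j_n\}} = t_{j_n}$ since the only even-size subset of $\{j_n\}$ is $\emptyset$, giving $t_{\{j_n\}} = t_{j_n}$; and $\tm_{\{j_n\}} = 1$ since the only odd subset is $\{j_n\}$ itself with $(\#I-1)/2 = 0$, giving $t_\emptyset = 1$. So $\tm_J = \tm_{J'}t_{j_n} + (-1)^{n-1}\tp_{J'}$.

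Next I would handle $\tm_L$ similarly by splitting off its least element: $\tm_L = \tm_{\{l_1\}}\tp_{L'} + (-1)^{\#\{l_1\}}\tp_{\{l_1\}}\tm_{L'}$... but the hypothesis $j_1 \Sord l_1$ and the structure of the target equation suggest instead expressing $\tm_L t_{j_n}$ and $t_{j_1}\tm_L$ via Lemma~\ref{le:tmtm} applied to the ordered pair $(\{j_1\}, L)$ or to products involving $t_{j_n}$. The cleanest route: recognize that all four summands of \eqref{eq:tmspec1} should be rewritten purely in terms of the $\tm$-polynomials of subsets of $J \cup L$, and that $J \cup L = J' \cup \{l_1\} \cup \{j_n\} = \{j_1\} \cup L' \cup \{l_1\} \cup \{j_n\}$. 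I would use \eqref{eq:tmtm} with $K = \{j_1\}$ (a one-element set, so its $\tm$ is $1$) against $L$, and with $K = L$... no: rather, apply Lemma~\ref{le:tmtm} to rewrite the product $\tm_{\{j_1\}\cup L'}\,t_{j_n}$ type terms and collect. Concretely, I expect the proof to run: (i) expand $\tm_J$ using \eqref{eq:tmdec} twice, peeling off $j_n$ then $j_1$; (ii) expand $\tm_L t_{j_n}$ and $t_{j_1}\tm_L$ by peeling off $l_1$ from $L$ and using that $t_{j_n} = \tp_{\{j_n\}}$, $t_{j_1} = \tp_{\{j_1\}}$, invoking \eqref{eq:tmdec}/\eqref{eq:tpdec}; (iii) observe that the middle sum $\sum_{i=2}^{n-1}(-1)^i \tm_{(J\cup L)\setminus\{j_i\}}$ is, up to sign and the two boundary terms $i=1,i=n$, exactly the expansion of the product $\tm_{J'\setminus\{j_1\}} \cdot (\text{something})$ given by \eqref{eq:tmtm}, so that everything cancels in pairs.

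The main obstacle I anticipate is bookkeeping of signs and of the two "missing" boundary terms $i=1$ and $i=n$ in the sum $\sum_{i=2}^{n-1}$: the product formula \eqref{eq:tmtm} naturally produces a sum over \emph{all} indices $i = 1, \dots, n$, so the $i=1$ and $i=n$ terms must be split off and matched against the $\tm_L t_{j_n}$ and $t_{j_1}\tm_L$ terms respectively, using the relations $J\setminus\{j_1,j_n\} = L\setminus\{l_1\}$ and $j_1 \Sord l_1$ to identify $(J\cup L)\setminus\{j_1\}$ with $L$ plus relabelling and $(J\cup L)\setminus\{j_n\}$ with $J' \cup \{l_1\}$. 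Tracking the $\ell_J$-sign conventions through these identifications — in particular the shift in $\ell$ caused by removing the smallest versus an interior element — is the delicate point; once the correct correspondence is set up, the cancellation is forced by Lemma~\ref{le:tmtm} and the identity $0 = \sum_{i=1}^{\#J'}(-1)^{\#J'-i}\tm_{J'\setminus\{j_i\}}$ derived in its proof, and no genuinely new idea beyond the $L=\emptyset$ specialization of Lemma~\ref{le:tmtm} is needed.
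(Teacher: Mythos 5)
Your plan coincides with the paper's proof: the paper likewise works entirely inside $A$ with the decompositions of Lemma~\ref{le:tptm} and the product formula of Lemma~\ref{le:tmtm}, identifying the middle sum as $\tm_{\{j_1,l_1\}}\tm_{J\setminus\{j_1\}}=(t_{l_1}-t_{j_1})\tm_{J\setminus\{j_1\}}$ minus the boundary term $(-1)^n\tm_{(J\cup L)\setminus\{j_n\}}$, and then cancelling the remaining terms after expanding via $\tp_{L'}$ and $\tm_{L'}$ (it peels $j_1$ off $J$ first rather than $j_n$, which is cosmetic). The sign and boundary-term bookkeeping you defer is exactly the routine computation the paper carries out, and it closes as you predict.
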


\begin{proof}
  Let $J'=J\setminus \{j_1\}$ and $L'=L\setminus \{l_1\}$.
  Then $\tm _J=\tp _{J'}-t_{j_1}\tm _{J'}$ by Lemma~\ref{le:tptm} and
  \[ \sum _{i=2}^{n-1}(-1)^i\tm _{(J\cup L)\setminus \{j_i\}}=
  (t_{l_1}-t_{j_1})\tm _{J'}-(-1)^n\tm _{(J\cup L)\setminus \{j_n\}} \]
  by Lemma~\ref{le:tmtm}. Thus Equation~\eqref{eq:tmspec1} is equivalent to
  \[
  \tp _{J'}-\tm _Lt_{j_n} -t_{l_1}\tm _{J'}
  +(-1)^n\tm _{(J\cup L)\setminus \{j_n\}}+(-1)^nt_{j_1}\tm _L =0.
  \]
  By Lemma~\ref{le:tptm}, the left hand side of the latter equation
  can be written as
  \begin{align*}
    &(\tp _{L'}t_{j_n}+(-1)^{n-2}q\tm _{L'})-(\tp _{L'}-t_{l_1}\tm _{L'})t_{j_n}
    -t_{l_1}(\tm _{L'}t_{j_n}+(-1)^{n-2}\tp _{L'})\\
    &\quad +(-1)^n(\tp _L-t_{j_1}\tm _L)+(-1)^nt_{j_1}\tm _L \\
    &\;=(-1)^{n-2}q\tm _{L'}+(-1)^{n-1}t_{l_1}\tp _{L'}
    +(-1)^n\tp _L=0
  \end{align*}
  which proves the claim.
\end{proof}

\begin{lemma}
  \label{le:tmspec2}
  Let $J,L\subseteq S$ such that $n:=\#J\ge 2$, $L\setminus
  \{l_1\}=J\setminus \{j_1,j_2\}$ and $j_1\Sord l_1\Sord j_2$.
  Then in $A$ we have
  \begin{align} \label{eq:tmspec2}
    \tm _J-(t_{j_2}t_{l_1}+t_{l_1}t_{j_2}-2q)\tm_{L\setminus \{l_1\}}
    -\tm_{j_1j_2}\tm _L
    +\sum_{i=3}^n(-1)^{i+1}\tm_{(J\cup L)\setminus \{j_i\}}=0.
  \end{align}
\end{lemma}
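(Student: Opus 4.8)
The plan is to mimic the proof of Lemma~\ref{le:tmspec1}: reduce the identity to one purely about the $\tp$/$\tm$ operators on a common subset, then expand everything using the decomposition formulas of Lemma~\ref{le:tptm} and the multiplication formula of Lemma~\ref{le:tmtm}, and finally collect terms. Concretely, write $J'=J\setminus\{j_1\}$, $J''=J\setminus\{j_1,j_2\}=L\setminus\{l_1\}$, and note that by hypothesis $j_2$ and $l_1$ are exactly the two elements lying strictly between $j_1$ and the rest, with $j_1\Sord l_1\Sord j_2$; so $L=\{l_1\}\cup J''$ with $l_1$ below everything in $J''$, and $J=\{j_1\}\cup\{j_2\}\cup J''$ but with $j_2$ inserted \emph{between} $l_1$'s slot and $J''$ — this ordering bookkeeping is what controls all the signs, so I would fix it very carefully at the outset.

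First I would rewrite the sum $\sum_{i=3}^n(-1)^{i+1}\tm_{(J\cup L)\setminus\{j_i\}}$ using Lemma~\ref{le:tmtm}: since $J\cup L=\{j_1,j_2,l_1\}\cup J''$ and the $j_i$ for $i\ge 3$ are precisely the elements of $J''$, the sum $\sum_{i}(-1)^{\cdots}\tm_{(J\cup L)\setminus\{j_i\}}$ is, up to the missing end terms and an overall sign, a product $\tm_{\{j_1,j_2,l_1\}}\tm_{J''}$ or similar, with the two "boundary" terms (removing an element of $J''$ adjacent to the split) reappearing explicitly. In parallel I would expand $\tm_J$ via \eqref{eq:tmdec} with the splitting $\{j_1\}\cup(\{j_2\}\cup J'')$, and expand $\tm_{j_1j_2}\tm_L$ and $(t_{j_2}t_{l_1}+t_{l_1}t_{j_2}-2q)\tm_{J''}$ the same way — noting $t_{j_2}t_{l_1}+t_{l_1}t_{j_2}-2q$ is essentially $\tm_{\{j_2\}}\tm_{\{l_1\}}+\tm_{\{l_1\}}\tm_{\{j_2\}}$ type data plus the constant, which Lemma~\ref{le:tptm} again relates to $\tp$'s. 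After substituting all of these, the claim becomes a linear identity among terms of the form $(\text{word in }t_{j_1},t_{j_2},t_{l_1})\cdot\tp_{J''}$ and $(\cdots)\cdot\tm_{J''}$ and $q\cdot(\cdots)\tm_{J''}$, which should collapse to $0$ by direct inspection exactly as the displayed computation at the end of the proof of Lemma~\ref{le:tmspec1} does.

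The main obstacle I expect is the sign and index bookkeeping: the element $l_1$ sits between $j_1$ and $j_2$ in $J\cup L$, so when I split off prefixes the function $\ell_J(I)$ and the powers $(-1)^{\#J'}$ in \eqref{eq:tmdec}/\eqref{eq:tpdec} shift in ways that must be tracked against the index $i$ ranging over $3,\dots,n$ versus the "true" positions in $J\cup L$. I would manage this by always reducing to a \emph{single} split at a time (peel off $j_1$, then treat $\{j_2,l_1\}$ versus $J''$), using that each of Lemmas~\ref{le:tptm} and~\ref{le:tmtm} handles one split cleanly, and never expanding two nested splits simultaneously. A secondary check is the base/degenerate case: if $n=2$ then $J''=\emptyset$, $\tp_{J''}=1$, $\tm_{J''}=0$, and the asserted identity degenerates to $\tm_{j_1j_2}-(t_{j_2}t_{l_1}+t_{l_1}t_{j_2}-2q)\cdot 0-\tm_{j_1j_2}\tm_L=0$, i.e. $\tm_{j_1j_2}=\tm_{j_1j_2}\tm_{\{l_1\}}$, and one must confirm this matches $\tm_{\{l_1\}}=1$ — or rather that the $n=2$ instance is consistent with the conventions; I would verify this small case explicitly before running the general induction-free computation, since it pins down the normalization of every sign.
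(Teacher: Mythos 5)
Your plan is essentially the paper's proof: the paper likewise converts $\sum_{i=3}^n(-1)^{i+1}\tm_{(J\cup L)\setminus\{j_i\}}$ into the single product $\tm_{\{j_1,l_1,j_2\}}\tm_{L\setminus\{l_1\}}$ via Lemma~\ref{le:tmtm} (here exactly, with no leftover boundary terms, unlike the situation in Lemma~\ref{le:tmspec1}) and then expands all remaining terms with Lemma~\ref{le:tptm} until everything cancels. Your $n=2$ sanity check is also consistent with the conventions (indeed $\tm_{\{l_1\}}=1$ and $\tm_\emptyset=0$), so the computation goes through as you describe.
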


\begin{proof}
  Let $J'=J\setminus \{j_1\}$ and $L'=L\setminus \{l_1\}$.
  Then
  \[ \sum _{i=3}^n(-1)^{i+1}\tm _{(J\cup L)\setminus \{j_i\}}=
  \tm _{\{j_1,l_1,j_2\}}\tm _{L'} \]
  by Lemma~\ref{le:tmtm}. Applying this and Lemma~\ref{le:tptm} repeatedly,
  the left hand side of Equation~\eqref{eq:tmspec1} becomes
  \begin{align*}
    &\tm _J-(t_{j_2}t_{l_1}+t_{l_1}t_{j_2}-2q)\tm_{L'}
    -(t_{j_2}-t_{j_1})(\tp _{L'}-t_{l_1}\tm _{L'}) 
    +(t_{l_1}t_{j_2}-t_{j_1}t_{j_2}+t_{j_1}t_{l_1}-q)\tm_{L'}\\
    &\qquad =\tm _J
    +(q-t_{j_1}t_{j_2})\tm_{L'}
    -(t_{j_2}-t_{j_1})\tp _{L'}\\
    &\qquad = \tp _{J'}-t_{j_1}\tm _{J'}
    +(q-t_{j_1}t_{j_2})\tm_{L'}
    -(t_{j_2}-t_{j_1})\tp _{L'} =0
  \end{align*}
  which proves the claim.
\end{proof}

  Now we turn to reductions.
  Observe that if $\gMat \subseteq \Mat $ are sets of subsets of $S$ then for
  $t\in A$ we have that 
  if $t$ reduces to zero modulo $\GB{\gMat}$ then
  $t$ reduces to zero modulo $\GB{\Mat }$.
  In particular, if $t$ reduces to zero modulo $\GB{\emptyset }$ then
  $t$ reduces to zero modulo $\GB{\Mat }$.
  Note that in the reduction modulo $\GB{\emptyset }$ only relations 
  \eqref{eq:ts2} and \eqref{eq:trts} are involved.

\begin{lemma}
\label{le:ttJ}
 Let $J\subseteq S$ and let $s\in S$.

\begin{enumerate}
\item Assume that $s\in J$. Then
\begin{align*}
t_s \tp _J \redO q\tm _J,\quad
t_s \tm _J \redO \tp _J.
\end{align*}
\item Assume that $s\notin J$. Let $J'=\{j\in J\,|\,j\Sord s\}$.
Then
\begin{align*}
t_s \tp _J \redO (-1)^{\#J'}\tp _{J\cup \{s\}}+q\tm _J,\quad
t_s \tm _J \redO (-1)^{\#J'-1}\tm _{J\cup \{s\}}+\tp _J.
\end{align*}
\end{enumerate}
\end{lemma}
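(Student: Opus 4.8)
The plan is to reduce everything to the two base relations \eqref{eq:ts2} and \eqref{eq:trts}, exploiting the decomposition formulas of Lemma~\ref{le:tptm} together with the identities $t_s\tp_{J''}$ and $t_s\tm_{J''}$ for the case where $s$ is either the first element of a set or does not belong to it. First I would treat part (1). Write $J'=\{j\in J\mid j\Sord s\}$ and split $J=J_1\cup\{s\}\cup J_2$ with $J_1=J'$ and $J_2=\{j\in J\mid s\Sord j\}$. Using \eqref{eq:tpdec} and \eqref{eq:tmdec} of Lemma~\ref{le:tptm} repeatedly, expand $\tp_J$ and $\tm_J$ as sums of products $\tp_{J_1}\,(\text{something in }t_s)\,\tp_{J_2}$ and $\tp_{J_1}\,(\text{something in }t_s)\,\tm_{J_2}$, where the middle factor is one of $t_s$, $\tp_{\{s\}}=t_s$, or $\tm_{\{s\}}=1$ — in fact since $\#\{s\}=1$ we get $\tp_{\{s\}}=t_s$ and $\tm_{\{s\}}=1$. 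Multiplying on the left by $t_s$ then produces a factor $t_s t_{j}$ for $j\in J_1$ (reducible via \eqref{eq:trts}) followed eventually by a factor $t_s^2$ (reducible via \eqref{eq:ts2} to $q$) or $t_s\cdot 1$. I would organize this so that the reduction first moves $t_s$ past the $\tp_{J_1}$ block using \eqref{eq:trts} — picking up sign changes and lower-order $q$-terms — and then hits the $t_s$ already sitting in $J$. Crucially, every reduction here uses only $\GB{\emptyset}$, and the remark preceding the lemma lets us upgrade to $\GB{\Mat}$ for free. A cleaner route: prove the case $s=j_1$ (so $J'=\emptyset$) directly from \eqref{eq:tpdec}, \eqref{eq:tmdec} with $J'=\{j_1\}$, giving $t_{j_1}\tp_{J\setminus\{j_1\}}$ and $t_{j_1}\tm_{J\setminus\{j_1\}}$ in closed form, observe $t_{j_1}^2\redO q$ and $t_{j_1}\cdot1=t_{j_1}$, and read off $t_{j_1}\tp_J$, $t_{j_1}\tm_J$; then handle general $s\in J$ by commuting $t_s$ leftwards through $t_{j_1},\dots$ — but it is probably smoother to induct on $\#J'$.

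For part (2), I would likewise induct on $\#J'$. In the base case $J'=\emptyset$, i.e.\ $s\Sord j$ for all $j\in J$, the set $J\cup\{s\}$ has $s$ as its smallest element, so by \eqref{eq:tpdec}, \eqref{eq:tmdec} with the splitting $\{s\}\cup J$ we get $\tp_{J\cup\{s\}}=t_s\tp_J - q\tm_J$ (using $\tp_{\{s\}}=t_s$, $\tm_{\{s\}}=1$, $\#\{s\}=1$) and $\tm_{J\cup\{s\}}=\tm_{\{s\}}\tp_J+(-1)^1\tp_{\{s\}}\tm_J = \tp_J - t_s\tm_J$; these are exact equalities in $A$, so no reduction is even needed — just rearrange to $t_s\tp_J=\tp_{J\cup\{s\}}+q\tm_J$ and $t_s\tm_J=\tp_J-\tm_{J\cup\{s\}}=(-1)^{-1}\tm_{J\cup\{s\}}+\tp_J$, matching the stated formulas with $\#J'=0$. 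For the inductive step, write $J=\{j_1\}\cup \widehat J$ with $j_1$ the least element of $J$ (note $j_1\Sord s$ when $\#J'\ge1$, and $\#\widehat J' = \#J'-1$ for the variable $s$ relative to $\widehat J$). Decompose $\tp_J=t_{j_1}\tp_{\widehat J}-q\tm_{\widehat J}$ and $\tm_J=\tp_{\widehat J}-t_{j_1}\tm_{\widehat J}$ by Lemma~\ref{le:tptm}; then $t_s\tp_J = t_s t_{j_1}\tp_{\widehat J} - q\, t_s\tm_{\widehat J}$. Apply \eqref{eq:trts} to rewrite $t_s t_{j_1}= -t_{j_1}t_s + 2q$ (reduction modulo $\GB{\emptyset}$), getting $t_s\tp_J \redO -t_{j_1}(t_s\tp_{\widehat J}) + 2q\tp_{\widehat J} - q(t_s\tm_{\widehat J})$, and substitute the induction hypothesis for $t_s\tp_{\widehat J}$ and $t_s\tm_{\widehat J}$. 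Then reassemble using Lemma~\ref{le:tptm} again (recognizing $t_{j_1}\tp_{\widehat J\cup\{s\}}$ etc.\ as pieces of $\tp_{J\cup\{s\}}$ and $\tm_{J\cup\{s\}}$, since $j_1$ is still the least element of $J\cup\{s\}$ when $\#J'\ge 1$) and check the signs and the surviving $q$-terms collapse to exactly $(-1)^{\#J'}\tp_{J\cup\{s\}}+q\tm_J$; the $\tm$ case is analogous.

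The only real obstacle I foresee is \emph{bookkeeping}: tracking the signs $(-1)^{\#J'}$ through the commutations, and verifying that the various $q\tm$ and $2q\tp$ contributions generated by \eqref{eq:trts} cancel down to a single clean $q\tm_J$ term (respectively, disappear in the $\tm$ formula). This is where I would be most careful, and I would double-check the base case $\#J'=0$ against the stated formulas to pin down the sign convention before running the induction. Everything else is a mechanical application of Lemma~\ref{le:tptm}, the two base reductions, and the monotonicity remark that lets us work modulo $\GB{\emptyset}$ throughout. No genuinely new idea is needed beyond organizing the induction on $\#J'$.
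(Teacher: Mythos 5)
Your proposal is correct and follows essentially the same route as the paper: the paper also inducts by peeling off the least element $j_1$ of $J$ (its induction is phrased on $\#J$, which amounts to the same recursion as your induction on $\#J'$), handles the cases $s\Sord j_1$ and $s=j_1$ directly via the exact identities of Lemma~\ref{le:tptm} together with $t_s^2-q$, and in the inductive step commutes $t_s$ past $t_{j_1}$ using \eqref{eq:trts} before invoking the hypothesis on $J\setminus\{j_1\}$. The sign bookkeeping you flag does collapse exactly as you predict.
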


\begin{remark}
  If $s\Sord j_1$ then in the last expression of Lemma~\ref{le:ttJ}(2) the
  leading term of $t_s\tm_J$ is $t_st_{j_2}\cdots t_{j_{\#J}}$. On the other
  hand, the leading term of both $\tm_{J\cup \{s\}}$ and $\tp _J$ is $t_J$ which is
  larger than $t_st_{j_2}\cdots t_{j_{\#J}}$ with respect to the deglex
  order.
  The reduction formula means that these two leading terms
  cancel and $t_s\tm _J$ reduces modulo $\GB{\emptyset }$ to the remaining expression.
  In fact, due to Equation~\eqref{eq:tmdec} for $\tm _{J\cup \{s\}}$,
  no reduction is needed to obtain the result.
\end{remark}

\begin{proof}
 We proceed by induction on $\#J$. Assume first that $s\Sord j$ for all $j\in J$.
 (This holds in particular if $J=\emptyset$.)
 Then $\tp _{J\cup \{s\}}=t_s\tp _J-q\tm _J$ and
 $\tm _{J\cup \{s\}}=\tp _J-t_s\tm _J$ in $A$ by Lemma~\ref{le:tptm}
 and hence (2) holds in this case.

 Assume now that $s\in J$ and $s\Sordeq j$ for all $j\in J$.
 Let $K=J\setminus \{s\}$. Then
\begin{align*}
 \tp _J=t_s\tp _K-q\tm _K,\quad
 \tm _J=\tp _K-t_s\tm _K
\end{align*}
in $A$ by Lemma~\ref{le:tptm}. Since $t_s^2-q\in \cI _q(\emptyset)$,
it follows that
\begin{align*}
 t_s\tp _J=&\;t_s^2\tp _K-qt_s\tm _K\redO q\tp _K-qt_s\tm _K=q\tm _J,\\
 t_s\tm _J=&\;t_s\tp _K-t_s^2\tm _K\redO t_s\tp _K-q\tm _K=\tp _J
\end{align*}
by Lemma~\ref{le:tptm}. Hence (1) holds in this case.

Finally, assume that $J\not=\emptyset $ and that $s>j_1$. Let $K=J\setminus \{j_1\}$. We assume first that
$s\in J$ and prove (1).
Since $t_st_{j_1}+t_{j_1}t_s-2q\in \cI _q(\emptyset )$,
by Lemma~\ref{le:tptm} and by induction hypothesis we obtain that
\begin{align*}
t_s\tp _J=t_s (t_{j_1}\tp _K-q\tm _K)&\;\redO (-t_{j_1}t_s+2q)\tp _K-qt_s \tm _K\\
&\;\redO -t_{j_1}(q\tm _K)+q\tp _K=q\tm _J.
\end{align*}
Similarly,
\begin{align*}
t_s\tm _J=t_s (\tp _K-t_{j_1}\tm _K)&\;\redO q\tm _K-(-t_{j_1}t_s+2q)\tm _K\\
&\;\redO t_{j_1}\tp _K-q\tm _K=\tp _J.
\end{align*}
A similar argument proves (2).
\end{proof}

The following lemma is a right-handed analogue of the previous result.

\begin{lemma}
\label{le:tJt}
 Let $J\subseteq S$ and let $s\in S$.

\begin{enumerate}
\item Assume that $s\in J$. Then
\begin{align*}
  \tp _J t_s\redO (-1)^{\#J+1}q\tm _J,\quad
  \tm _J t_s \redO (-1)^{\#J+1}\tp _J.
\end{align*}
\item Assume that $s\notin J$. Let
  $J''=\{j\in J\,|\,s\Sord j\}$. Then
\begin{align*}
  \tp _J t_s\redO &\;(-1)^{\#J''}\tp _{J\cup \{s\}}+(-1)^{\#J +1}q\tm _J,\\
  \tm _J t_s\redO &\;(-1)^{\#J''}\tm _{J\cup \{s\}}+(-1)^{\#J+1}\tp _J.
\end{align*}
\end{enumerate}
\end{lemma}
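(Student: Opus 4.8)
The plan is to obtain this from Lemma~\ref{le:ttJ} by a symmetry argument rather than by repeating the induction, although the latter is equally feasible: one runs the induction of Lemma~\ref{le:ttJ} from the right, with cases $s$ larger than all of $J$ (then part~(2) follows at once from Lemma~\ref{le:tptm}), $s=\max J\in J$ (then part~(1) follows via $t_s^2-q\in\cI_q(\emptyset)$), and $s\Sord j_n$ (then one strips $j_n$ off the right end, reduces $t_{j_n}t_s$ using $t_{j_n}t_s+t_st_{j_n}-2q\in\cI_q(\emptyset)$, and applies the induction hypothesis together with Lemmas~\ref{le:tptm} and~\ref{le:ttJ}). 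For the symmetry argument, let $\tau\colon A\to A$ be the $R$-linear anti-automorphism with $\tau(t_s)=t_s$ for all $s\in S$, so $\tau(t_{i_1}\cdots t_{i_k})=t_{i_k}\cdots t_{i_1}$, and let $<^{\mathrm{op}}$ be the total order on $S$ opposite to $\Sord$. For an object defined from $\Sord$ I write a superscript $\mathrm{op}$ for the counterpart built from $<^{\mathrm{op}}$; in particular $(\tp_J)^{\mathrm{op}}$, $(\tm_J)^{\mathrm{op}}$ and ``reduction modulo $\GB{\emptyset}$ with respect to $<^{\mathrm{op}}$'' make sense, and Lemma~\ref{le:ttJ} holds verbatim with $<^{\mathrm{op}}$ in place of $\Sord$, its proof using nothing about $\Sord$ beyond its being a total order. (The set $\GB{\emptyset}$ itself does not depend on the order.)

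The argument rests on two facts about $\tau$. First, $\tau$ fixes each generator in \eqref{eq:ts2} and \eqref{eq:trts} as a polynomial, and for each such generator $g$ the monomial $\tau(\lm(g))$ is exactly the $<^{\mathrm{op}}$-leading monomial of $\tau(g)=g$: for $g=t_rt_s+t_st_r-2q$ with $s\Sord r$ one has $\lm(g)=t_rt_s$, and $\tau(\lm(g))=t_st_r$ is $<^{\mathrm{op}}$-leading because $r<^{\mathrm{op}}s$. Hence $\tau$ sends a single $\Sord$-reduction step modulo $\GB{\emptyset}$ to a single $<^{\mathrm{op}}$-reduction step modulo $\GB{\emptyset}$, and conversely; so $f$ reduces to $g$ modulo $\GB{\emptyset}$ with respect to $\Sord$ if and only if $\tau(f)$ reduces to $\tau(g)$ modulo $\GB{\emptyset}$ with respect to $<^{\mathrm{op}}$. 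Second, reversing positions inside $J=\{j_1\Sord\cdots\Sord j_n\}$ gives $\ell_J(I)+\ell^{\mathrm{op}}_J(I)=\#I\cdot\#(J\setminus I)$ for every $I\subseteq J$; since $\tau(t_{J\setminus I})=t^{\mathrm{op}}_{J\setminus I}$ while the powers of $q$ are unchanged, this yields $\tau(\tp_J)=(\tp_J)^{\mathrm{op}}$ (the exponent $\#I$ is even) and $\tau(\tm_J)=(-1)^{\#J+1}(\tm_J)^{\mathrm{op}}$ (for $\#I$ odd the correcting sign is $(-1)^{\#(J\setminus I)}=(-1)^{\#J-1}$); applying $\tau$ once more, $\tau((\tp_J)^{\mathrm{op}})=\tp_J$ and $\tau((\tm_J)^{\mathrm{op}})=(-1)^{\#J+1}\tm_J$.

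With this in hand I would feed $t_s(\tp_J)^{\mathrm{op}}$ and $t_s(\tm_J)^{\mathrm{op}}$ into Lemma~\ref{le:ttJ} read over $<^{\mathrm{op}}$ and transport the resulting reductions back through $\tau$, using $\tau(\tp_Jt_s)=t_s(\tp_J)^{\mathrm{op}}$ and $\tau(\tm_Jt_s)=(-1)^{\#J+1}t_s(\tm_J)^{\mathrm{op}}$. If $s\in J$, Lemma~\ref{le:ttJ}(1) over $<^{\mathrm{op}}$ gives that $t_s(\tp_J)^{\mathrm{op}}$ reduces to $q(\tm_J)^{\mathrm{op}}$ and $t_s(\tm_J)^{\mathrm{op}}$ to $(\tp_J)^{\mathrm{op}}$ with respect to $<^{\mathrm{op}}$, whence $\tp_Jt_s\redO(-1)^{\#J+1}q\tm_J$ and $(-1)^{\#J+1}\tm_Jt_s\redO\tp_J$, that is $\tm_Jt_s\redO(-1)^{\#J+1}\tp_J$; this is part~(1). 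If $s\notin J$, the set $\{j\in J\mid j<^{\mathrm{op}}s\}$ occurring in Lemma~\ref{le:ttJ}(2) over $<^{\mathrm{op}}$ coincides with the set $J''=\{j\in J\mid s\Sord j\}$ of the present statement, and transporting the two formulas of Lemma~\ref{le:ttJ}(2) through $\tau$, simplifying the signs with $(-1)^{\#(J\cup\{s\})+1}=(-1)^{\#J}$, produces precisely the two formulas of part~(2). (Scaling a reduction by the unit $(-1)^{\#J+1}$ is harmless, since every element of $\GB{\emptyset}$ has leading coefficient~$1$.)

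The only real work is the sign bookkeeping: carrying the factor $(-1)^{\#J+1}$ that $\tau$ attaches to $\tm_J$ through the computations and checking that it cancels against the signs produced by Lemma~\ref{le:ttJ}, and matching the set $J'$ of Lemma~\ref{le:ttJ}(2), read over $<^{\mathrm{op}}$, with the set $J''$ here. Nothing else is delicate; in particular, the failure of $\tau$ to preserve $\Sord$-leading monomials is irrelevant, because after passing to $<^{\mathrm{op}}$ it does preserve them.
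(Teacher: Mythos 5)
Your argument is correct, and it differs in form from the paper's, whose entire proof is ``see the proof of Lemma~\ref{le:ttJ}'', i.e.\ redo the induction in mirror image --- which is your first, briefly sketched option. Your main argument instead makes the mirror symmetry precise: the reversal anti-automorphism $\tau$ fixes every element of $\GB{\emptyset}$ and carries $\Sord$-leading monomials to $<^{\mathrm{op}}$-leading monomials, so it transports reduction sequences between the two orders; and the identity $\ell_J(I)+\ell^{\mathrm{op}}_J(I)=\#I\cdot\#(J\setminus I)$ gives exactly $\tau(\tp_J)=(\tp_J)^{\mathrm{op}}$ and $\tau(\tm_J)=(-1)^{\#J+1}(\tm_J)^{\mathrm{op}}$ (I checked the parity computation and the transported signs in both parts; they match the statement, and the identification of $\{j\in J\mid j<^{\mathrm{op}}s\}$ with $J''$ is right). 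What this buys over the paper's route is that the otherwise mysterious factors $(-1)^{\#J+1}$ and the swap $J'\leftrightarrow J''$ are explained once and for all by the symmetry, rather than re-emerging case by case from a second induction; the cost is the need to verify the compatibility of $\tau$ with reductions and the one combinatorial sign identity, both of which you do correctly. Either route is acceptable; yours is arguably the more illuminating write-up of what the paper leaves implicit.
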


\begin{proof}
  See the proof of Lemma~\ref{le:ttJ}.
\end{proof}
%

\begin{lemma}
  Let $J\subseteq S$ with $J\not=\emptyset $.
  If $\tm _J$ reduces to zero modulo $\GB{\Mat }$,
  then $\tp _J$
  reduces to zero modulo $\GB{\Mat }$.
\label{le:tpred}
\end{lemma}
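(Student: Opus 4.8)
Set $K=J\setminus\{j_1\}$, where $j_1=\min J$. The plan is to exploit the identity relating $\tp_J$ and $\tm_J$ that comes from Lemma~\ref{le:tptm}: putting $J'=\{j_1\}$ and $J''=K$ in \eqref{eq:tpdec} and \eqref{eq:tmdec} yields $\tp_J=t_{j_1}\tp_K-q\tm_K$ and $\tm_J=\tp_K-t_{j_1}\tm_K$ in $A$, and eliminating $\tp_K$ gives
\[
   \tp_J=t_{j_1}\tm_J+(t_{j_1}^2-q)\,\tm_K .
\]
This is exactly the reduction $t_{j_1}\tm_J\redO\tp_J$ of Lemma~\ref{le:ttJ}(1): in $t_{j_1}\tm_J=t_{j_1}\tp_K-t_{j_1}^2\tm_K$ the only monomials divisible by $t_{j_1}^2$ are those of $-t_{j_1}^2\tm_K$, and the reductions $t_{j_1}^2\to q$ carry $-t_{j_1}^2\tm_K$ to $-q\tm_K$; for the same reason $(t_{j_1}^2-q)\tm_K$ reduces to $0$ modulo $\GB{\emptyset}$.

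Assume now $\tm_J$ reduces to $0$ modulo $\GB{\Mat }$ and fix a chain $\tm_J=f_0\to f_1\to\dots\to f_k=0$. Left multiplication by a fixed monomial sends reduction steps to reduction steps and creates no new cancellations, so $t_{j_1}\tm_J=t_{j_1}f_0\to\dots\to t_{j_1}f_k=0$ is again a chain modulo $\GB{\Mat }$. I would then reduce $\tp_J$ by \emph{shadowing} this chain: set $g_i:=t_{j_1}f_i+(t_{j_1}^2-q)\tm_K$, so that $g_0=\tp_J$ and $g_k=(t_{j_1}^2-q)\tm_K$, which reduces to $0$ modulo $\GB{\emptyset}$. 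It then suffices to show $g_i$ reduces to $g_{i+1}$ modulo $\GB{\Mat }$ for each $i$. If $f_i\to f_{i+1}$ acts on a monomial $\mu$ of $f_i$ that does not begin with $t_{j_1}$, then $t_{j_1}\mu$ occurs in $g_i$ with the coefficient of $\mu$ in $f_i$ — it cannot occur in $(t_{j_1}^2-q)\tm_K$, whose monomials either begin with $t_{j_1}^2$ or are free of $t_{j_1}$ — so the step transports verbatim. If $\mu$ begins with $t_{j_1}$, one first performs on $g_i$ the $t_{j_1}^2\to q$ reductions at the positions exposed by the left multiplication, which only reorganize terms already carried by the correction $(t_{j_1}^2-q)\tm_K$, and then applies the transported step. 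Iterating, $\tp_J=g_0$ reduces to $0$ modulo $\GB{\Mat }$.

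The hard part is this last bookkeeping: a reduction acting ``at the boundary'' — on a monomial beginning with $t_{j_1}$, or one created by an earlier step and involving a circuit $J'$ with $j_1\in J'$ — can, after left multiplication by $t_{j_1}$, simultaneously create and cancel divisibility by $t_{j_1}^2$, so one must check carefully that the shadow step is still available on $g_i$. An alternative that may be cleaner is strong induction on $\#J$: if $\tm_J$ reduces to $0$ then $\lm(\tm_J)=t_{J\setminus\{j_1\}}$ is reducible, necessarily by some $\tm_{J'}$ ($J'\in\Mat$) whose leading monomial is a $t_{j_1}$-free consecutive subword of $t_{J\setminus\{j_1\}}$, hence also of $t_J=\lm(\tp_J)$; reducing the leading monomial of $\tp_J$ by this $\tm_{J'}$ mirrors the first step of the chain for $\tm_J$, and one peels off reductions one at a time, using Lemmas~\ref{le:tmspec1} and~\ref{le:tmspec2} (and Lemmas~\ref{le:ttJ}, \ref{le:tJt}) to deal with the boundary cases. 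In either approach all of the content lies in controlling how the reductions $t_{j_1}^2\to q$ interleave with the circuit reductions coming from the $\tm_{J'}$.
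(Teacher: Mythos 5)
Your proof is correct and takes essentially the same route as the paper: the paper uses the two identities $\tp _J=t_{j_1}\tp _K-q\tm _K$ and $\tm _J=\tp _K-t_{j_1}\tm _K$ from Lemma~\ref{le:tptm}, deduces $\tp _K\redO[\Mat ]t_{j_1}\tm _K$ from $\tm _J\redO[\Mat ]0$, and concludes $\tp _J\redO[\Mat ](t_{j_1}^2-q)\tm _K\redO 0$, and your identity $\tp _J=t_{j_1}\tm _J+(t_{j_1}^2-q)\tm _K$ is exactly these two equations combined. The bookkeeping you flag as the hard part (transporting a reduction chain across the additive correction) is passed over at the very same point in the paper's proof, so your version is, if anything, the more careful one.
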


\begin{proof}
  Let $K=J\setminus \{j_1\}$.
  Lemma~\ref{le:tptm} gives that
  \begin{align*}
    \tp _J&\;=t_{j_1}\tp _K-q\tm _K,\quad
    \tm _J=\tp _K-t_{j_1}\tm _K
  \end{align*}
  in $A$ and the leading term of $\tm _J$ is the leading term of $\tp _K$.
  Thus, since $\tm _J\redO[\Mat ]0$, it follows that
  $\tp _K\redO [\Mat ]t_{j_1}\tm _K$. Hence
  \begin{align*}
    \tp _J=t_{j_1}\tp _K-q\tm _K\redO [\Mat ]t_{j_1}^2\tm _K-q\tm _K
  \end{align*}
  which reduces to zero modulo $\GB{\Mat }$ since
  $t_{j_1}^2-q\in \GB{\Mat }$.
\end{proof}

\begin{lemma}
  Let $J\subseteq S$ and let $s\in S$. Assume that $\tm _J$ reduces to zero
  modulo $\GB{\Mat }$. If $J\not=\emptyset $, $r\Sord s$ for all $r\in J$
  or $\#J\ge 2$, $s\Sord j_2$ then $\tm _{J\cup \{s\}}$
  reduces to zero modulo $\GB{\Mat }$.
\label{le:subtreduction}
\end{lemma}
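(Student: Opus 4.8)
The plan is a case analysis on the position of $s$ with respect to $J$, reducing in each case to the hypothesis $\tm_J\redO[\Mat]0$ (and its consequence $\tp_J\redO[\Mat]0$ from Lemma~\ref{le:tpred}) by means of the product formulas \eqref{eq:tpdec} and \eqref{eq:tmdec} of Lemma~\ref{le:tptm}. We may assume $s\notin J$, since otherwise $J\cup\{s\}=J$. Put $K=J\setminus\{j_1\}$.

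Suppose first that $r\Sord s$ for all $r\in J$ (so $J\ne\emptyset$). Then \eqref{eq:tmdec} with $J''=\{s\}$ gives the identity $\tm_{J\cup\{s\}}=\tm_J\,t_s+(-1)^{\#J}\tp_J$ in $A$. Here $\tm_J\,t_s\redO[\Mat]0$, obtained by multiplying a reduction sequence for $\tm_J$ on the right by $t_s$, and $\tp_J\redO[\Mat]0$ by Lemma~\ref{le:tpred}. Every monomial occurring in this reduction of $\tm_J\,t_s$ ends in the letter $t_s$, while no monomial of $\tp_J$ contains $t_s$; hence the reduction can be carried out inside the sum $\tm_J\,t_s+(-1)^{\#J}\tp_J$ without disturbing the second term (the reduced subword always lies in the $\tm_J$-factor), after which $(-1)^{\#J}\tp_J$ is reduced to $0$. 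The case $s\Sord j_1$ is entirely analogous: \eqref{eq:tmdec} now yields $\tm_{J\cup\{s\}}=\tp_J-t_s\,\tm_J$ (the identity already isolated in the remark after Lemma~\ref{le:ttJ}), the reduction of $t_s\,\tm_J$ obtained by left multiplication involves only monomials beginning with $t_s$, and $\tp_J\redO[\Mat]0$ by Lemma~\ref{le:tpred} (using $\#J\ge2$).

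It remains to treat $j_1\Sord s\Sord j_2$, where $s$ becomes the second smallest element of $J\cup\{s\}$. Peeling off $j_1$ by \eqref{eq:tmdec}, then $s$ by \eqref{eq:tmdec} and \eqref{eq:tpdec} (valid since $s\Sord k$ for all $k\in K$), and substituting $\tp_K=\tm_J+t_{j_1}\tm_K$, one obtains the identity
$$\tm_{J\cup\{s\}}=(t_s-t_{j_1})\,\tm_J+\bigl((t_st_{j_1}+t_{j_1}t_s-2q)-(t_{j_1}^2-q)\bigr)\tm_K$$
in $A$. The second summand lies in the two-sided ideal generated by \eqref{eq:ts2} and \eqref{eq:trts}, and reduces to $0$ modulo $\GB{\emptyset}$ by successively reducing the prefixes $t_st_{j_1}$ and $t_{j_1}^2$ of its monomials (clean since $\tm_K$ involves neither $t_s$ nor $t_{j_1}$). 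The first summand equals $t_s\tm_J-t_{j_1}\tm_J$, and each of $t_s\tm_J$, $t_{j_1}\tm_J$ reduces to $0$ modulo $\GB{\Mat}$ by left multiplication of a reduction sequence for $\tm_J$; their monomials are distinguished by their first letter, so $(t_s-t_{j_1})\tm_J\redO[\Mat]0$. Combining these reductions gives $\tm_{J\cup\{s\}}\redO[\Mat]0$.

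I expect the main obstacle to be precisely this last step of combining: in the case $j_1\Sord s\Sord j_2$ the monomial supports of $t_s\tm_J$, $t_{j_1}\tm_J$ and the error term $(\cdots)\tm_K$ are not disjoint (the first and the error term both produce monomials beginning $t_st_{j_1}$, and so on), so one must fix the order of the individual reduction steps carefully, each acting on the intended occurrence of a relation, or else appeal to the confluence of reductions using only \eqref{eq:ts2} and \eqref{eq:trts}; the controlled reduction formulas of Lemma~\ref{le:ttJ}(2) and Lemma~\ref{le:tJt}(2) are what make this precise. In the first two cases this difficulty does not arise, because there a single letter ($t_s$) cleanly separates the two summands.
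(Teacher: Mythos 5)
Your proof is correct and follows the paper's argument essentially verbatim: the same case split on the position of $s$, the identities $\tm_{J\cup\{s\}}=\tm_Jt_s\pm\tp_J$ and $\tm_{J\cup\{s\}}=\tp_J-t_s\tm_J$ from Lemma~\ref{le:tptm} combined with Lemma~\ref{le:tpred} in the first two cases, and the identical final identity $\tm_{J\cup\{s\}}=(t_s-t_{j_1})\tm_J+(t_st_{j_1}+t_{j_1}t_s-2q)\tm_{J'}-(t_{j_1}^2-q)\tm_{J'}$ in the case $j_1\Sord s\Sord j_2$. The only difference is that you articulate more explicitly than the paper why the reductions of the individual summands can be combined, which is a legitimate refinement rather than a different route.
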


\begin{proof}
  If $r\Sord s$ for all $r\in J$
  then $\tm _{J\cup \{s\}}=\tm _Jt_s-\tp _J$
  by Lemma~\ref{le:tptm}. Thus
  the first half of the claim holds by Lemma~\ref{le:tpred}.
  
  If $\#J\ge 2$, $s\Sord j_1$ then
  $\tm _{J\cup \{s\}}=\tp _J-t_s\tm _J$ by Lemma~\ref{le:tptm} and again
  the claim holds. If $s=j_1$ then there is nothing to prove.
  Finally, if $j_1\Sord s\Sord j_2$ then let $J'=J\setminus \{j_1\}$.
  Lemma~\ref{le:tptm} gives that
  \begin{align*}
    \tm _{J\cup \{s\}}=&\;\tm _{\{j_1,s\}}\tp _{J'}+\tp_{\{j_1,s\}}\tm _{J'}
    =(t_s-t_{j_1})\tp _{J'}+(t_{j_1}t_s-q)\tm_{J'}\\
    =&\;(t_s-t_{j_1})(\tm_J+t_{j_1}\tm_{J'})+(t_{j_1}t_s-q)\tm_{J'}\\
    =&\;(t_s-t_{j_1})\tm_J+(t_st_{j_1}+t_{j_1}t_s-2q)\tm_{J'}-(t_{j_1}^2-q)
    \tm_{J'}.
  \end{align*}
  This expression reduces to zero modulo $\GB{\Mat }$ which
  proves the remaining claim.
\end{proof}

\subsection{Matroids}

{}From now on let $\Mat $ be the set of circuits of a loopless matroid without parallel
elements on ground set $S$
and let $\OMat $ be a set of Gr\"obner circuits of $\Mat $.

\begin{example}
  A typical example where the set of circuits $\Mat $ of a matroid is not
  sufficient to define a Gr\"obner basis of $\dos _q(\Mat )$ is the following.

  Let $S=\{1,2,3,4\}$ with the usual order and let $\Mat $ be the set
  system consisting of $\{1,2,4\}$. Then
  $$\tm_{1234}=t_2t_3t_4-t_1t_3t_4+t_1t_2t_4-t_1t_2t_3-qt_4+qt_3-qt_2+qt_1$$
  is zero in $\dos _q(\Mat )$ since
  $$\tm _{1234}=-t_3\tm_{124}+\tp_{124}=-t_3\tm_{124}+t_1\tm_{124}$$
  by Lemma~\ref{le:ttJ} and $\tm_{124}=0$ in $\dos_q(\Mat )$.
  However, the leading term of $\tm _{1234}$ cannot be reduced using the
  generators of $I_q(\Mat )$.
\end{example}

Before we prove that $\GB{\OMat }$ is a Gr\"obner basis of $\dos _q(\Mat )$,
we show that $\tm _J$ reduces to zero modulo $\GB{\OMat }$ for all $J\in \Mat
$.

\begin{lemma}
  Let $J,K\subseteq S$ be two dependent sets such that $J\cap K$ is
  independent. Then for all $l \in 
  J \cup K$ the set $(J \cup K) \setminus \{ l \}$ is dependent.
  \label{le:bigcirc}
\end{lemma}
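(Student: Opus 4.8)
The plan is to reduce the statement to a single application of the circuit axiom (C3). Since $J$ is dependent it contains a circuit $C_J\in\Mat$ with $C_J\subseteq J$, and likewise $K$ contains a circuit $C_K\in\Mat$ with $C_K\subseteq K$. The key preliminary observation is that these circuits can be chosen distinct, i.e.\ $C_J\ne C_K$: if we had $C_J=C_K$, then this circuit would be contained in $J\cap K$, contradicting the hypothesis that $J\cap K$ is independent. This is the only place the hypothesis on $J\cap K$ enters, and it is genuinely needed, as the case $J=K$ a single circuit shows.

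Now fix $l\in J\cup K$ and split into cases according to whether $l$ lies in the chosen circuits. If $l\notin C_J$, then $C_J\subseteq(J\cup K)\setminus\{l\}$, so $(J\cup K)\setminus\{l\}$ is dependent; by symmetry the same holds if $l\notin C_K$. In the remaining case $l\in C_J\cap C_K$, apply the circuit axiom (C3) to the two distinct circuits $C_J,C_K$ and the element $x=l\in C_J\cap C_K$: it yields a circuit $L\in\Mat$ with $L\subseteq(C_J\cup C_K)\setminus\{l\}\subseteq(J\cup K)\setminus\{l\}$, so $(J\cup K)\setminus\{l\}$ is again dependent.

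Since these cases exhaust all $l\in J\cup K$, the lemma follows. There is no serious obstacle here; the only point requiring any thought is that $C_J$ and $C_K$ may be taken distinct, which is precisely what independence of $J\cap K$ provides, and the rest is bookkeeping with the circuit axiom. This lemma will then feed into the proof that $\tm_J$ reduces to zero modulo $\GB{\OMat}$ for $J\in\Mat$.
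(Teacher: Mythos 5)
Your proof is correct and follows essentially the same route as the paper's: note that circuits $C_J\subseteq J$ and $C_K\subseteq K$ must be distinct because $J\cap K$ is independent, dispose of the case where some circuit avoids $l$, and otherwise apply the circuit axiom (C3) to $C_J$, $C_K$ at $x=l$. No issues.
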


\begin{proof}
  Let $l\in J\cup K$.
  If there is a circuit contained in $J\setminus \{l\}$ or $K \setminus \{l\}$ then
  it is contained in $(J\cup K)\setminus \{l\}$.
  On the other hand, if $C \subseteq J$, $D\subseteq K$ are circuits containing $l$
  then $C\not=D$ since $J\cap K$ is independent.
  Hence by the circuit axiom there is a circuit contained in $(C\cup D)\setminus
  \{l\}$.
\end{proof}

\begin{proposition} \label{pr:depred0}
  For any dependent set $J$, $\tm _J$ reduces to zero modulo $\GB{\OMat }$.
\end{proposition}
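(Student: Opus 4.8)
The plan is to prove the statement by induction on the degree‑lexicographic order $\dlo$ on dependent sets, using the recursive definition (GC) of Gr\"obner circuits. So suppose $J$ is dependent and that $\tm_{J'} \reduc{\GB{\OMat}} 0$ has been established for every dependent set $J' \dlo J$. If $J \in \OMat$, then $\tm_J \in \GB{\OMat}$ and there is nothing to prove, so assume $J \notin \OMat$. By the negation of (GC) there is a Gr\"obner circuit $K \in \OMat$ with $K \dlo J$ and $K \setminus \{k_1\} \sso J \setminus \{j_1\}$, i.e.\ the broken circuit $K \setminus \{k_1\}$ sits convexly inside $J \setminus \{j_1\}$. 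The goal is to "divide out" this broken circuit from $\tm_J$ and land on smaller dependent sets.

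The key step is a rewriting of $\tm_J$ in terms of $\tm_K$ and of $\tm$'s of dependent sets strictly smaller than $J$. First I would handle the position of $j_1$ relative to $K$. Write $L := K$ and compare $k_1$ with $j_1$ (and with $j_2$). Since $K \dlo J$ and $K \setminus \{k_1\}$ is a convex chunk of $J \setminus \{j_1\}$, one checks $j_1 \Sordeq k_1$; the three cases to separate are essentially $j_1 \Sord k_1$ with $k_1 \Sord j_2$, the case $j_1 \Sord k_1$ with $k_1 = j_2$ or $k_1 \Sord j_2$ handled via a convex splitting, and the case $j_1 = k_1$. In the generic case I expect to use Lemma~\ref{le:tmspec1} (when the broken circuit meets $J$ in the convex block $J \setminus \{j_1,j_n\}$, matching $L \setminus \{l_1\}$) or Lemma~\ref{le:tmspec2} (when it matches $J \setminus \{j_1,j_2\}$), which express $\tm_J$ as a combination of $\tm_K$, a product $(t_a t_b + t_b t_a - 2q)\,\tm_{K \setminus \{k_1\}}$, and $\pm t\,\tm_{K\setminus\{k_1\}}$ on one side, plus a sum $\sum_i (-1)^{i\pm 1}\tm_{(J \cup K)\setminus\{j_i\}}$. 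For the remaining configurations of $j_1$ I would peel off the part of $J$ lying below $k_1$ using the convex decomposition of Lemma~\ref{le:tptm} and Lemma~\ref{le:tmtm} to reduce to one of these two normalized forms, or apply Lemma~\ref{le:subtreduction} to add back the small elements one at a time once the core identity is in place.

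It then remains to argue that every term on the right reduces to $0$ modulo $\GB{\OMat}$. The term involving $\tm_K$ does because $K \in \OMat$, so $\tm_K \in \GB{\OMat}$. The terms of the shape $(t_a t_b + t_b t_a - 2q)\,\tm_{K\setminus\{k_1\}}$ or $t_c\,\tm_{K\setminus\{k_1\}}$: here $K\setminus\{k_1\}$ is a broken circuit, not itself dependent, so I cannot invoke the inductive hypothesis directly; instead I would observe that $t_c\,\tm_{K\setminus\{k_1\}}$ (via Lemma~\ref{le:ttJ}/\ref{le:tJt}) reduces to a $\pm q^a \tm$ of $K\setminus\{k_1\}$ or of $(K\setminus\{k_1\})\cup\{c\}$, and that $(K\setminus\{k_1\})\cup\{c\}$ is, for the relevant $c$, contained in $J$ hence again dependent (by Lemma~\ref{le:bigcirc}, using that $J$ and $K$ overlap in the independent set $J\cap K$) and strictly $\dlo J$ — alternatively, one recovers $\tm_K$ itself up to the relation $t_{k_1}^2 - q$. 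Finally each $\tm_{(J\cup K)\setminus\{j_i\}}$ for $i\ge 2$: the set $(J\cup K)\setminus\{j_i\}$ is dependent by Lemma~\ref{le:bigcirc}, and it is strictly smaller than $J$ in the $\dlo$ order because $J\cup K$ has $j_1$ as a common least element with $J$ while $J\subsetneq J\cup K$ forces $(J\cup K)\setminus\{j_i\}\dlo J$ for $i\ge 2$ (same cardinality as $J$ when $K\subseteq J\cup\{j_\ast\}$, and a strictly smaller symmetric‑difference minimum otherwise); so the inductive hypothesis applies. Combining, $\tm_J$ reduces to $0$ modulo $\GB{\OMat}$.

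The main obstacle I anticipate is the careful bookkeeping of the $\dlo$‑comparisons: verifying in each configuration of $j_1$ versus $k_1,j_2$ that all the auxiliary dependent sets produced — the $(J\cup K)\setminus\{j_i\}$ and any $(K\setminus\{k_1\})\cup\{c\}$ — are genuinely $\dlo$‑smaller than $J$, so that the induction is well‑founded, and matching each configuration to the correct one of Lemmas~\ref{le:tmspec1}, \ref{le:tmspec2}, \ref{le:subtreduction}. The algebraic identities themselves are already packaged in the technical lemmas; the work is in the combinatorics of convexity and the term order.
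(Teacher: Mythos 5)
Your overall strategy --- induction on the deglex order, picking a witness $K\in\OMat$ for the failure of (GC), and rewriting $\tm_J$ via Lemmas~\ref{le:tmtm}, \ref{le:tmspec1}, \ref{le:tmspec2}, \ref{le:subtreduction} into $\tm$'s of smaller dependent sets plus terms that visibly reduce --- is exactly the paper's. But there is a concrete gap: your assertion that ``one checks $j_1\Sordeq k_1$'' is false, and the omitted case $k_1\Sord j_1$ is both genuinely possible and the one requiring a separate idea. For example, in the rank-$2$ uniform matroid on $S=\{1,\dots,5\}$ one finds $\{1,4,5\}\in\OMat$ and $J=\{2,4,5\}\notin\OMat$, where the \emph{only} witnessing Gr\"obner circuit is $K=\{1,4,5\}$ with $k_1=1\Sord 2=j_1$; none of your three cases applies, and you cannot reach $J$ from $K$ by Lemma~\ref{le:subtreduction} since $k_1\notin J$. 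The paper handles this by forming the auxiliary set $L=\{k_1\}\cup(J\setminus\{j_1\})$, which has the same cardinality as $J$ but is deglex-smaller (so the induction hypothesis applies to it and, if needed, to $L\cap J$ and to the sets $(J\cup L)\setminus\{j_i\}$ via Lemma~\ref{le:bigcirc}), and then invokes the identity $0=\tm_J-\tm_L+\sum_{i\ge 2}(-1)^i\tm_{(J\cup L)\setminus\{j_i\}}$ from Lemma~\ref{le:tmtm}. Your proof as written does not produce this (or any) reduction of $\tm_J$ in that configuration.

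Two smaller points. First, your case list also effectively skips the configuration $j_2\Sordeq k_1$ (the broken circuit sitting strictly above $j_2$), which the paper treats separately with yet another auxiliary set and Lemma~\ref{le:tmtm} applied to $J\setminus L$ and $L$; ``convex splitting'' is not enough of an argument there. Second, your justification that $(J\cup K)\setminus\{j_i\}\dlo J$ is not sound as stated: if $\#(J\cup K)>\#J+1$ then removing one element still leaves a set of cardinality $>\#J$, which is deglex-\emph{larger}. The argument only works because in each case the paper first replaces $K$ by an auxiliary dependent set $L$ with $\#(J\cup L)\le\#J+1$ and with $\min(L\setminus J)\Sord$ the relevant $j_i$; this bookkeeping is precisely where the case analysis earns its keep, and it needs to be done explicitly.
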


\begin{proof}
  We proceed by induction with respect to deglex order.

  Let $J$ be a dependent set. Recall that $\#J\ge 3$.
  If $J\in \OMat $ then $\tm _J$ reduces to zero. In
  particular, this holds if $J$ is the smallest dependent set with respect
  to deglex order.
  
  Assume now that $J\notin \OMat $. Then there exists
  $K\in \OMat $ such that $K\dlo J$ and
  $K\setminus \{k_1\}\sqsubseteq J\setminus \{j_1\}$.
  In particular, $K$ is dependent. We now distinguish several cases according
  to the relations between $k_1$ and the elements of $J$.

  If $k_1<j_1$ then let $L=\{k_1\}\cup (J\setminus \{j_1\})$.
  In this case $L\dlo J$, $K\subset L$ and
  $K\setminus \{k_1\}\sqsubseteq L\setminus \{l_1\}$.
  Hence $\tm _L$ reduces to zero
  modulo $\GB{\OMat }$ by induction hypothesis. If $L\cap J$ is dependent then
  $\tm _{L\cap J}$ reduces to zero by induction hypothesis and hence
  $\tm _J$ reduces to zero by Lemma~\ref{le:subtreduction}. Assume now that
  $L\cap J$ is independent. Then, by Lemma~\ref{le:bigcirc},
  $(J\cup L)\setminus \{j_i\}$
  is dependent for all $i\in \{2,3,\dots ,\#J\}$ and is smaller
  than $J$ with respect to $\dlo $.
  We conclude from Lemma~\ref{le:tmtm} that
  \begin{align*}
    0=\tm _J-\tm _L+\sum _{i=2}^{\#J}(-1)^i\tm _{(J\cup L)\setminus \{j_i\}}
  \end{align*}
  in $A$ and hence $\tm _J$ reduces to zero by induction hypothesis.

  If $k_1=j_1$ then $\tm _J$ reduces to zero by
  using that $\tm _K$ reduces to zero and by repeatedly applying
  Lemma~\ref{le:subtreduction}.

  If $j_1\Sord k_1\Sord j_2$ then $\#K<\#J$ since $K\dlo J$.
  Since $\tm _K$ reduces to zero, by repeatedly applying
  Lemma~\ref{le:subtreduction} we obtain a dependent set $L\subseteq S$
  such that $\#L=\#J-1$, $l_1=k_1$,
  $L\setminus \{l_1\}\sqsubseteq J\setminus \{j_1\}$, and $\tm _L$ reduces to
  zero.
  There are two cases: $j_{\#J}\notin L$ or $j_2\notin L$.
  If $J\cap L$ is dependent then $\tm _J$ reduces to zero by induction
  hypothesis and by Lemma~\ref{le:subtreduction}.
  If $J\cap L$ is independent then $(J\cup L)\setminus \{s\}$ is dependent
  for all $s\in J\cap L$ by Lemma~\ref{le:bigcirc}. Now if $j_{\#J}\notin L$
  then $\tm _J$ reduces to zero by induction hypothesis and by
  Lemma~\ref{le:tmspec1}. Observe that in Lemma~\ref{le:tmspec1} $\tm _J$ and
  $\tm _Lt_{j_{\#J}}$ are the summands with the largest leading term.
  Similarly, if $j_2\notin L$ then $\tm _J$ reduces to zero by induction
  hypothesis and by Lemma~\ref{le:tmspec2}.

  Finally, assume that $j_2=k_1$ or $j_2\Sord k_1$. By
  repeatedly applying Lemma~\ref{le:subtreduction} we obtain that the set
  $L=\{s\in J\,|\,k_1=s\text{ or }k_1\Sord s \}$ is dependent and $L\dlo J$.
  If $l_1\in J$ then $\tm _J$ reduces to zero by Lemma~\ref{le:subtreduction}.
  If $l_1\notin J$ and $J\cap L$ is dependent then again $J$ reduces to zero
  by induction hypothesis and Lemma~\ref{le:subtreduction}.
  In the last case, if $J\cap L$ is independent then $(J\cup L)\setminus \{s\}$
  is dependent for all $s\in L$, $l_1\Sord s$. In this case
  Lemma~\ref{le:tmtm} applied to $J\setminus L$ and $L$
  implies that
  \begin{align*}
    \tm _{J\setminus L}\tm _L=\tm _J+\sum _{i=2}^{\#L}(-1)^{i-1}
    \tm _{(J\cup L)\setminus \{l_i\}}
  \end{align*}
  and the leading term on both sides of the equation is the leading term of
  $\tm _J$. Thus $\tm _J$ reduces to zero by induction hypothesis.
\end{proof}

\section{Independence Statement and Proofs} 
  \label{sec:proofs}

  We first show that $I_q(\Mat)$ is independent of the chosen total order of 
  $S$. 

  \begin{proposition}
    \label{pr:indep}
    For any set system $\Mat $ over $S$,
    the ideal $I_q(\Mat)$ and hence the algebra $\dos _q(\Mat )$ are independent of the 
    total order on $S$.
  \end{proposition}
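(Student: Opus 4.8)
\emph{Proof plan.} The plan is to show that $I_q(\Mat)$ is unchanged when two elements of $S$ consecutive in the order are interchanged; since $S$ is finite, any total order is reached from any other by a chain of such interchanges, so this suffices. First I would note that the two-sided ideal $I_q(\emptyset)$ generated by the elements \eqref{eq:ts2} and \eqref{eq:trts} alone is the same for every total order, because $t_rt_s+t_st_r-2q$ is symmetric in $r,s$, so these generators form the same subset of $A$ independently of the order. As $I_q(\Mat)$ is generated by $I_q(\emptyset)$ together with the elements $\tm_J$, $J\in\Mat$, and $I_q(\emptyset)\subseteq I_q(\Mat)$, it will be enough to prove: if $\Sord'$ is a total order differing from $\Sord$ only in that two $\Sord$-consecutive elements $a\Sord b$ satisfy $b\Sord' a$, and $\widetilde{\tm}_J,\widetilde{\tp}_J$ denote the elements of Section~\ref{sec:technical} formed with respect to $\Sord'$, then $\widetilde{\tm}_J\equiv\pm\tm_J\pmod{I_q(\emptyset)}$ for every $J\in\Mat$. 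Granting this, every $\widetilde{\tm}_J$ lies in $I_q(\Mat)$, so the ideal formed using $\Sord'$ is contained in $I_q(\Mat)$, and equality follows by interchanging the roles of the two orders.

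If $\{a,b\}\not\subseteq J$ there is nothing to do: $\Sord$ and $\Sord'$ induce the same order on $J$, hence on every subset of $J$, so $\ell_J(I)$ and the monomials $t_{J\setminus I}$ are unchanged and $\widetilde{\tm}_J=\tm_J$. So assume $\{a,b\}\subseteq J$ and set $J_1=\{j\in J:j\Sord a\}$, $J_2=\{j\in J:b\Sord j\}$, so that $J=J_1\cup\{a,b\}\cup J_2$. Because $a,b$ are consecutive, this is a valid decomposition in the sense of Lemma~\ref{le:tptm} for both orders (with $a,b$ in opposite internal order), and $J_1,J_2$ keep the same internal order in each. A one-line evaluation of the definitions for a two-element set gives $\tm_{\{a,b\}}=t_b-t_a$ and $\tp_{\{a,b\}}=t_at_b-q$, and correspondingly $\widetilde{\tm}_{\{a,b\}}=t_a-t_b=-\tm_{\{a,b\}}$ and $\widetilde{\tp}_{\{a,b\}}=t_bt_a-q$; since $t_at_b+t_bt_a-2q\in I_q(\emptyset)$ this gives $\widetilde{\tp}_{\{a,b\}}\equiv-\tp_{\{a,b\}}\pmod{I_q(\emptyset)}$. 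As $J_1$ and $J_2$ contain neither $a$ nor $b$, the elements $\tm_{J_1},\tp_{J_1},\tm_{J_2},\tp_{J_2}$ coincide with their tildes.

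Finally I would apply Lemma~\ref{le:tptm} twice with respect to each order: first to split $\{a,b\}\cup J_2$ into $\{a,b\}$ and $J_2$, then to split $J$ into $J_1$ and $\{a,b\}\cup J_2$. Since $I_q(\emptyset)$ is a two-sided ideal, the congruences $\widetilde{\tm}_{\{a,b\}}=-\tm_{\{a,b\}}$ and $\widetilde{\tp}_{\{a,b\}}\equiv-\tp_{\{a,b\}}$ propagate through \eqref{eq:tpdec} and \eqref{eq:tmdec}, giving first $\widetilde{\tp}_{\{a,b\}\cup J_2}\equiv-\tp_{\{a,b\}\cup J_2}$ and $\widetilde{\tm}_{\{a,b\}\cup J_2}\equiv-\tm_{\{a,b\}\cup J_2}$, and then $\widetilde{\tm}_J\equiv-\tm_J\pmod{I_q(\emptyset)}$, as required. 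The one point requiring care is purely arithmetical: carrying the signs correctly through the two nested applications of Lemma~\ref{le:tptm}. It works because in the first application $J'=\{a,b\}$ has even size, so the factors $(-1)^{\#J'}$ in \eqref{eq:tpdec} and \eqref{eq:tmdec} equal $1$, and in the second application the factor $(-1)^{\#J_1}$ multiplies both $\tp_{J_1}$ and $\tm_{\{a,b\}\cup J_2}$ and hence is harmless. I would close by observing that no matroid property of $\Mat$ is used, so the proposition holds for arbitrary set systems, as stated.
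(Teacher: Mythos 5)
Your argument is correct. It shares the paper's overall strategy — reduce to an adjacent transposition $a\Sord b\mapsto b\Sord' a$, observe that the quadratic generators \eqref{eq:ts2}, \eqref{eq:trts} are order-independent, and show that for $\{a,b\}\subseteq J$ the generator $\tm_J$ only changes sign modulo $I_q(\emptyset)$ — but the mechanism for the key sign-flip is genuinely different. The paper works directly with the defining sum: it tracks the combinatorial coefficient $\ell_J(I)$, treating separately the subsets $I$ containing exactly one of $a,b$ (where the sign of $(-1)^{\ell_J(I)}$ flips and the monomial is unchanged) and pairing each $I$ with $a,b\notin I$ against $I'=I\cup\{a,b\}$, using $t_bt_a=-t_at_b+2q$ to see that the pair of terms jointly flips sign. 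You instead reduce everything to the two-element computation $\widetilde{\tm}_{\{a,b\}}=-\tm_{\{a,b\}}$, $\widetilde{\tp}_{\{a,b\}}\equiv-\tp_{\{a,b\}}$ and propagate the sign through the factorization identities \eqref{eq:tpdec}, \eqref{eq:tmdec} of Lemma~\ref{le:tptm}, which applies to $J=J_1\cup\{a,b\}\cup J_2$ precisely because $a,b$ are neighbours in $S$. Your route avoids all bookkeeping with $\ell_J(I)$ and makes the sign $-1$ (rather than merely $\pm1$) transparent; its only cost is that it leans on Lemma~\ref{le:tptm}, whereas the paper's computation is self-contained and placed before that lemma in the logical order of the text (though Lemma~\ref{le:tptm} does not depend on Proposition~\ref{pr:indep}, so there is no circularity). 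All the delicate points — that $J_1,J_2$ contain neither $a$ nor $b$ and so have order-independent $\tm,\tp$, that the decomposition is valid for both orders, and that the congruences survive multiplication because $I_q(\emptyset)$ is two-sided — are addressed.
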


  \begin{proof}
    We have to show that for any two total orders on $S$ the defining ideals
    of $\dos _q(\Mat )$ coincide.
    Relations~\eqref{eq:ts2}, \eqref{eq:trts} are obviously independent of the chosen
    total order. Relations~\eqref{eq:trts} can be used to reformulate a defining
    relation in \eqref{eq:tJ} in terms of another order.
    We may simplify the problem by looking at orders $\Sord $, $\ll $
    which differ by exchanging two
    neighboring elements $a,b\in S$ with $a\Sord b$, that is, $r\ll s$ for $r,s\in S$
    if and only if either $(r,s)=(b,a)$ or $r\Sord s$, $(r,s)\not=(a,b)$.
    We write $\ell ^<_J(I)$ and $t^<_{J\setminus I}$ and similarly for $\ll $
    to indicate the dependency on the order.

    Let $J\in \Mat $, $I\subseteq J$ with $2\nmid \#I$.
    If $a\notin J$ or $b\notin J$ then 
    $$
      (-1)^{\ell ^\ll _J(I)}=(-1)^{\ell ^<_J(I)},\quad
      (-q)^{(\#I-1)/2}t^\ll_{J\setminus I}=(-q)^{(\#I-1)/2}t^<_{J\setminus I}
    $$
    and hence \eqref{eq:tJ} takes the same form with respect to $\Sord $ and $\ll $.
    It remains to consider the case when $a,b\in J$. We
    prove that in this case the defining relations differ by a sign.
    Then the proof of the proposition is completed.

    Assume that $a\in I$, $b\in J\setminus I$ or $a\in J\setminus I$, $b\in I$. Then
    $$(-1)^{\ell ^\ll _J(I)}=-(-1)^{\ell ^<_J(I)},\quad
    t^\ll _{J\setminus I}=t^<_{J\setminus I}$$
    and therefore $(-1)^{\ell ^\ll _J(I)}(-q)^{(\#I-1)/2}t^\ll _{J\setminus I}=
    -(-1)^{\ell ^<_J(I)}(-q)^{(\#I-1)/2}t^<_{J\setminus I}$.
    Assume now that $a,b\in J\setminus I$ and let $I'=I\cup \{a,b\}$. Then
    \begin{gather*}
      \ell ^\ll _J(I)=\ell ^<_J(I),\quad \ell ^\ll _J(I')=\ell ^<_J(I'),\quad
      t^\ll _{J\setminus I'}=t^<_{J\setminus I'},\\
      t^\ll _{J\setminus I}=t_{j_1}\cdots (t_bt_a)\cdots t_{j_{\#J}}
      =t_{j_1}\cdots (-t_at_b+2q)\cdots t_{j_{\#J}}
      =-t^<_{J\setminus I}+2qt^<_{J\setminus I'},
    \end{gather*}
    and $(-1)^{\ell ^<_J(I)}=(-1)^{\ell ^<_J(I')}$. Hence
    \begin{align*}
      &(-1)^{\ell ^\ll _J(I)}(-q)^{(\#I-1)/2}t^\ll_{J\setminus I}
      +(-1)^{\ell ^\ll _J(I')}(-q)^{(\#I'-1)/2}t^\ll_{J\setminus I'}\\
      &\quad =
      (-1)^{\ell ^<_J(I)}(-q)^{(\#I-1)/2}(-t^<_{J\setminus I}+2qt^<_{J\setminus I'})
      +(-1)^{\ell ^<_J(I)}(-q)^{(\#I-1)/2}(-q)t^<_{J\setminus I'}\\
      &\quad = -(-1)^{\ell ^<_J(I)}(-q)^{(\#I-1)/2}t^<_{J\setminus I}
      +q(-1)^{\ell ^<_J(I)}(-q)^{(\#I-1)/2}t^<_{J\setminus I'}\\
      &\quad = -(-1)^{\ell ^<_J(I)}(-q)^{(\#I-1)/2}t^<_{J\setminus I}
      -(-1)^{\ell ^<_J(I')}(-q)^{(\#I'-1)/2}t^<_{J\setminus I'}.
    \end{align*}
    This is what we wanted to show.
  \end{proof}

  Next we provide the proof of the main result.

  \begin{proof}[Proof of Theorem \ref{thm:main}]

    \noindent {\sf Claim:} If $\Mat$ is a loopless matroid without parallel
    elements then $I_q(\OMat) = I_q(\Mat)$. 

    $\triangleleft$ 
      {}From Proposition \ref{pr:depred0} we deduce that $\tm_J \in I_q(\OMat)$ for 
      all circuits $J$. Hence $I_q(\Mat) \subseteq I_q(\OMat)$. To show equality it
      suffices to show that $\tm_J \in I_q(\Mat)$ for all dependent $J \subseteq S$.
      We prove the assertion by induction on the cardinality of the difference set
      of $J$ and the circuit of largest cardinality contained in it. If the 
      cardinality is $0$ then $J$ itself is a circuit and hence $\tm_J \in I_q(\Mat)$
      by definition. If the cardinality is positive then there is an $s \in J$ such that
      $J \setminus \{s\}$ is dependent and by induction $\tm_{J \setminus \{s\}} \in 
      I_q(\Mat)$. By Lemma \ref{le:ttJ}(2) we can write $\tm_J$ as an $A$ linear
      combination of $\tp_{J \setminus \{s\}}$, $t_s\tm_{J \setminus \{s\}}$ and 
      elements of $I_q(\emptyset) \subseteq I_q(\Mat)$. By Lemma \ref{le:tptm} and
      since $\tm_{J\setminus \{s\}}\in I_q(\Mat)$ it follows that 
      $\tp_{J \setminus \{s\}}\in I_q(\Mat)$ and the assertion follows.
    $\triangleright$

    We complete the proof by showing that $\GB{\Mat}$ is a Gr\"obner basis of $I_q(\OMat) = I_q(\Mat)$.
    For this we verify that the conditions of Theorem \ref{thm:Gcrit} under the simplification provided 
    by Lemma \ref{le:simp} are fulfilled.


    First we have to find minimal generators of the modules $J_{f,g}$,
    where $f,g$ are polynomials
    \eqref{eq:ts2}, \eqref{eq:trts} or $\tm _J$ with $J\in \OMat $.
    According to Lemma~\ref{lem:trivialSpol} we can ignore generators
    $(\lm (g)\,m,1)\otimes (1,m\,\lm (f))$ and
    $(1,\lm (f)\,m)\otimes (m\,\lm (g),1)$ of $J_{f,g}$, where $m$ is an arbitrary monomial.
    Further, since we do not fix an order on the Gr\"obner basis,
    we may restrict ourselves to generators of $J_{f,g}$ of the form
    $(1,m)\otimes (n,1)$ and $(1,m)\otimes (1,n)$.
    Therefore the following cases have to be considered.

    \medskip

    \noindent\textsf{Case} 1.
      $f=t_s^2-q$, $g=t_s^2-q$, $s\in S$.

      The remaining generator of $J_{f,g}$ is
      $(1,t_s)\otimes (t_s,1)$.
      Then $ft_s-t_sg$ is obviously zero.

    \smallskip

    \noindent\textsf{Case} 2.
      $f=t_s^2-q$, $g=t_st_r+t_rt_s-2q$, $r,s\in S$, $r\Sord s$.

      The remaining generator of $J_{f,g}$ is
      $(1,t_s)\otimes (t_r,1)$. The corresponding $S$-polynomial is
      \begin{align*}
        ft_r-t_sg=&\;t_s^2t_r-qt_r-(t_s^2t_r+t_st_rt_s-2qt_s)\\
        \redO &\;-qt_r-(-t_rt_s+2q)t_s+2qt_s\\
        \redO &\;-qt_r+t_rq=0.
      \end{align*}

    \smallskip

    \noindent\textsf{Case} 3.
      $f=t_st_r+t_rt_s-2q$, $g=t_r^2-q$, $r,s\in S$, $r\Sord s$.

      The remaining generator of $J_{f,g}$ is
      $(1,t_s)\otimes (t_r,1)$. The corresponding $S$-polynomial is
      \begin{align*}
        ft_r-t_sg=&\;(t_st_r+t_rt_s-2q)t_r-t_s(t_r^2-q)\\
        \redO &\;t_r(-t_rt_s+2q)-2qt_r+qt_s\\
        \redO &\;-qt_s+qt_s=0.
      \end{align*}

    \smallskip

    \noindent\textsf{Case} 4.
      $f=t_s^2-q$, $g=\tm _J$, $J\in \OMat $, $s=j_2$.

      Let $K=J\setminus \{j_1,j_2\}$.
      The remaining generator of $J_{f,g}$ is
      $(1,t_s)\otimes (t_K,1)$. The corresponding $S$-polynomial is
      \begin{align*}
        ft_K-t_sg=&\;(t_s^2-q)t_K-t_s\tm _J.
      \end{align*}
      By Lemma~\ref{le:ttJ} the expression $t_s\tm _J$ reduces to
      $\tp _J$ modulo $\GB{\emptyset }$. In this reduction
      the leading term $t_s^2t_K$ of $t_s\tm _J$ has to be reduced at one moment to
      $qt_K$. Therefore $t_s\tm _J-(t_s^2-q)t_K$ also reduces to
      $\tp _J$ modulo $\GB{\emptyset }$.
      Thus
      $t_s\tm _J-(t_s^2-q)t_K$ reduces to zero modulo $\GB{\OMat }$
      by Lemma~\ref{le:tpred}.

    \smallskip

    \noindent\textsf{Case} 5.
      $f=\tm _J$, $g=t_s^2-q$, $J\in \OMat $, $s=j_{\#J}$.

      Let $K=J\setminus \{j_1,j_{\#J}\}$.
      The remaining generator of $J_{f,g}$ is
      $(1,t_K)\otimes (t_s,1)$. The corresponding $S$-polynomial is
      \begin{align*}
        ft_s-t_Kg=&\;\tm _Jt_s-t_K(t_s^2-q).
      \end{align*}
      By Lemma~\ref{le:tJt} the expression $\tm _Jt_s$ reduces to
      $(-1)^{\#J+1}\tp _J$ modulo $\GB{\emptyset }$.
      In this reduction
      the leading term $t_Kt_s^2$ of $\tm _Jt_s$ has to be reduced at one moment to
      $qt_K$. Therefore
      $\tm _Jt_s-t_K(t_s^2-q)$
      also reduces to
      $(-1)^{\#J+1}\tp _J$ modulo $\GB{\emptyset }$.
      Thus
      $\tm _Jt_s-t_K(t_s^2-q)$
      reduces to zero modulo $\GB{\OMat }$
      by Lemma~\ref{le:tpred}.

    \smallskip

    \noindent\textsf{Case} 6.
      $f=t_st_r+t_rt_s-2q$, $g=t_rt_p+t_pt_r-2q$, $p,r,s\in S$, $p\Sord r\Sord s$.

      The remaining generator of $J_{f,g}$ is
      $(1,t_s)\otimes (t_p,1)$. The corresponding $S$-polynomial is
      \begin{align*}
        ft_p-t_sg=&\;(t_st_r+t_rt_s-2q)t_p-t_s(t_rt_p+t_pt_r-2q)\\
        =&\;t_rt_st_p-2qt_p-t_st_pt_r+2qt_s\\
        \redO &\;t_r(-t_pt_s+2q)-(-t_pt_s+2q)t_r-2qt_p+2qt_s\\
        \redO &\;-(-t_pt_r+2q)t_s+t_p(-t_rt_s+2q)-2qt_p+2qt_s=0.
      \end{align*}

    \smallskip

    \noindent\textsf{Case} 7.
      $f=t_st_r+t_rt_s-2q$, $g=\tm _J$, $r,s\in S$, $r\Sord s$, $J\in \OMat $,
      $r=j_2$.

      Let $K=J\setminus \{j_1,j_2\}$.
      The remaining generator of $J_{f,g}$ is
      $(1,t_s)\otimes (t_K,1)$. The corresponding $S$-polynomial is
      \begin{align*}
        ft_K-t_sg=&\;(t_st_r+t_rt_s-2q)t_K-t_s\tm _J.
      \end{align*}
      By Lemma~\ref{le:ttJ} the expression $t_s\tm _J$ reduces to
      $\pm \tm _{J\cup \{s\}}+\tp _J$ modulo $\GB{\emptyset }$. In this reduction
      the leading term $t_st_rt_K$ of $t_s\tm _J$ has to be reduced at one moment to
      $(-t_rt_s+2q)t_K$. Therefore $t_s\tm _J-(t_st_r+t_rt_s-2q)t_K$ also reduces to
      $\pm \tm _{J\cup \{s\}}+\tp _J$ modulo $\GB{\emptyset }$.
      Since
      $\tp _J$ reduces to zero modulo $\GB{\OMat }$
      by Lemma~\ref{le:tpred}, it suffices to prove that $\tm _{J\cup \{s\}}$
      reduces to zero modulo $\GB{\OMat }$. The latter holds for $s\Sordeq j_2$ and for
      $j_{\#J}\Sordeq s$ by Lemma~\ref{le:subtreduction} and for $j_2\Sord s\Sord j_{\#J}$
      by (GC).

    \smallskip

    \noindent\textsf{Case} 8.
      $f=\tm _J$, $g=t_st_r+t_rt_s-2q$, $r,s\in S$, $r\Sord s$, $J\in \OMat $,
      $s=j_{\#J}$.

      Let $K=J\setminus \{j_1,s\}$.
      The remaining generator of $J_{f,g}$ is
      $(1,t_K)\otimes (t_r,1)$. The corresponding $S$-polynomial is
      \begin{align*}
        ft_r-t_Kg=&\;\tm _J t_r-t_K(t_st_r+t_rt_s-2q).
      \end{align*}
      The proof is similar to the one in Case 7 and uses Lemma~\ref{le:tJt}.

    \smallskip

    \noindent\textsf{Case} 9.
      $f=\tm _J$, $g=\tm _K$, $J\setminus \{j_1\}\sso K\setminus \{k_1\}$,
      $J,K\in \OMat $, $J\neq K$. This case does not appear by Condition (GC) on
      the elements of $\OMat $.

    \smallskip

    \noindent\textsf{Case} 10.
      $f=\tm _J$, $g=\tm _K$, $J,K\in \OMat $,
      there exists $i\in \{3,\dots ,\# J\}$ such that
      $j_i=k_2$, $j_{i+1}=k_3$, \dots , $j_{\#J}=k_{\#J-i+2}$, $\#J -i+2<\#K$.

      Let $n=\#J-i+2$,
      $$ J'=J\setminus \{j_1\},\quad K'=K\setminus \{k_1\},\quad
      L=\{j\in J'\,|\,j\Sord k_1\},\quad M=\{j\in J'\setminus K'\,|\,k_1\Sord j\}. $$
      Thus the sets $L$, $M$ and $J'\cap K'$ are pairwise disjoint and their union is
      $J'$ (if $k_1\notin J'$) or $J'\setminus \{k_1\}$ (if $k_1\in J'$). We have to
      show that
      \begin{align}
        \tm _Jt_{K'\setminus J'}-t_{J'\setminus K'}\tm _K \redO[\OMat ] 0.
        \label{eq:c10red}
      \end{align}
      We will proceed in several steps, and at some moment we will have to distinguish
      the cases $k_1\in J'$ and $k_1\notin J'$.

      Using Lemma~\ref{le:tptm} and Lemma~\ref{le:tpred} we observe first that
      \begin{align} \label{eq:tmJtK'}
        \tm _Jt_{K'\setminus J'} =\tm _{J\cup K'} \text{ $+$ lower terms which
        reduce to zero modulo $\GB{\OMat}$.}
      \end{align}
      Further, by applying Lemma~\ref{le:ttJ}(2) and Lemma~\ref{le:tpred} we obtain
      that
      \[ t_M\tm _K=\tm _{M\cup K} \text{ $+$ lower terms which
      reduce to zero modulo $\GB{\OMat}$.} \]
      In particular, if $j_1=k_1$ then $M\cup K=J'\cup K=J\cup K'$ and hence
      \eqref{eq:c10red} holds.

      Assume now that $k_1\in J'$. Then $t_{J'\setminus K'}=t_Lt_{k_1}t_M$.
      Lemma~\ref{le:ttJ}(1) gives that
      \[ t_{k_1}\tm _{M\cup K}=\tp _{M\cup K} \text{ $+$ lower terms which
      reduce to zero modulo $\GB{\OMat}$} \]
      and Equation~\eqref{eq:tpdec} implies that
      \[ t_L\tp _{M\cup K}=\tp _{L\cup M\cup K} \text{ $+$ lower terms which
     reduce to zero modulo $\GB{\OMat}$}. \]
     Now, $L\cup M\cup K=(J\cup K)\setminus \{j_1\}$ and
     \begin{align} \label{eq:tmJK'}
      \tm _{J\cup K'}=\tm _{J\cup K}=\tp _{(J\cup K)\setminus \{j_1\}}
      -t_{j_1}\tm _{(J\cup K)\setminus \{j_1\}}
     \end{align}
     by Equation~\eqref{eq:tmdec}. Since $K\subseteq (J\cup K)\setminus \{j_1\}$,
     Proposition~\ref{pr:depred0} yields that
     $\tm _{(J\cup K)\setminus \{j_1\}}$ reduces to zero modulo $\GB{\OMat}$. Thus
     we conclude from \eqref{eq:tmJtK'} and \eqref{eq:tmJK'}
     that \eqref{eq:c10red} holds.

     Finally, assume that $k_1\notin J$. Then
     $t_{J'\setminus K'}=t_Lt_M$. Further,
     \begin{align} \notag
      t_L\tm _{M\cup K}=&\;\tm _{\{j_1\}\cup L}\tm _{M\cup K}
      \text{ $+$ lower terms which reduce to zero modulo $\GB{\OMat}$}\\
      \intertext{by definition of $\tm _{\{j_1\}\cup L}$, and hence}
      t_L\tm _{M\cup K}
      =&\; \sum _{m=1}^{\#(M\cup K)}(-1)^{m-1}\tm _{(J\cup K)\setminus \{(M\cup K)_m\}}
      \text{ $+$ lower terms which reduce to zero}
      \label{eq:tLtmMK}
    \end{align}
    by Lemma~\ref{le:tmtm}, where $(M\cup K)_m$ is the $m$th element of
    $M\cup K$. The summand containing the leading term of the last expression is
    $\tm _{(J\cup K)\setminus \{k_1\}}$ since $(M\cup K)_1=k_1$. Since $J\cap K$ is
    independent by the assumptions $J\cap K=J\cap K'\sso K'$ and $K\in \OMat $,
    Lemma~\ref{le:bigcirc} and Proposition~\ref{pr:depred0}
    imply that all other summands in \eqref{eq:tLtmMK}
    reduce to zero modulo $\GB{\OMat}$. Thus \eqref{eq:c10red} holds in this case.
  \end{proof}

  It remains to provide the proof of Corollary \ref{cor:koszul}.

  \begin{proof}[Proof of Corollary \ref{cor:koszul}]
    \noindent {\sf Claim:} There is a total order on $S$ for which all Gr\"obner circuits are
    of size $3$. 

    $\triangleleft$ We recall the characterization of supersolvable matroids
    given in \cite[Thm. 2.8 (5)]{a-bjoernerziegler91}. There it is shown that for a
    supersolvable matroid $\Mat$ on ground set $S$ the set $S$ can be partitioned into
    subsets $S = S_1 \cup \cdots \cup S_f$ such that for any $1 \leq h \leq f$ and two elements $x,y \in S_h$    
    there is an $1 \leq g < h$ and $z \in S_g$ such that $\{x,y,z\} \in \Mat$ is a circuit.
    Now we choose a total order on $S$ such that for $1 \leq g < h \leq f$ all elements from $S_g$ come
    before $S_h$. Assume that $J$ is a Gr\"obner circuit in this order.  If $J \cap S_h \geq 2$
    for some $1 \leq h \leq f$ then by \cite[Thm. 2.8 (5)]{a-bjoernerziegler91} for any two elements $x,y \in J \cap S_h$
    there is a circuit $K$ of size $3$ such that $K \setminus \{k_1\} = \{ x,y\}$. By choosing two elements $\{x,y\}$ from 
    $J \cap S_h$ for which $\{x,y\} \sqsubseteq J$ we can choose $K$ such that $K \dloeq J$ and
    $K \setminus \{k_1\} \sqsubseteq J \setminus \{j_1\}$. From this it follows by (GC) that $J = K$.
    $\triangleright$

    Now if all Gr\"obner circuits are of size $3$ then the Gr\"obner basis from Corollary \ref{cor:hilbert} is 
    quadratic. Hence by well known facts (see \cite[Sec. 4]{inp-froeberg99}) it follows that 
    $\dos_q(\Mat)$ is Koszul.
  \end{proof}

  \section*{Acknowledgements}
    We thank Graham Denham, Bernd Sturmfels and Alexander Suciu for useful comments on an earlier version of this paper.   

  \bibliographystyle{amsalpha}
  \bibliography{os}
\end{document}